\crefname{section}{Section}{Sections}
\crefname{subsection}{\S}{\S\S}
\crefname{subsubsection}{\S}{\S\S}
\crefname{table}{table}{tables}
\crefname{lemma}{lemma}{lemmas}
\theoremstyle{plain}
\newtheorem{lemma}{Lemma}[section]
\newtheorem{proposition}[lemma]{Proposition}
\newtheorem{corollary}[lemma]{Corollary}
\newtheorem{theorem}[lemma]{Theorem}
\newtheorem{conjecture}[lemma]{Conjecture}
\theoremstyle{nonumberplain}
\theoremstyle{plain}
\newtheorem{definition}[lemma]{Definition}
\newtheorem{example}[lemma]{Example}
\newtheorem{remark}[lemma]{Remark}
\crefname{definition}{definition}{definitions}
\crefname{ex}{example}{examples}
\crefname{remark}{remark}{remarks}
\crefname{convention}{convention}{conventions}
\crefname{notation}{notation}{notations}
\crefname{proposition}{proposition}{propositions}
\crefname{corollary}{corollary}{corollaries}
\crefname{theorem}{theorem}{theorems}
\crefname{enumi}{}{}
\crefname{assumption}{assumption}{Assumptions}
\crefname{equation}{}{}
\numberwithin{equation}{section}
\renewcommand{\theequation}{\thesection-\arabic{equation}}
\theoremstyle{nonumberplain}
\newtheorem{proof}{Proof}
\newcommand\pf[1]{\newtheorem{#1}{Proof of \Cref{#1}}}
\newcommand\bF{{\mathbb F}}
\newcommand\bQ{{\mathbb Q}}
\newcommand\bZ{{\mathbb Z}}
\DeclareMathOperator{\id}{id}
\DeclareMathOperator{\ord}{\mathrm{ord}}
\newcommand\numberthis{\addtocounter{equation}{1}\tag{\theequation}}
\newcommand{\qedhere}{\mbox{}\hfill\ensuremath{\blacksquare}}
\title{Quadratic rotation symmetric Boolean functions}
\author{Alexandru Chirvasitu and Thomas W. Cusick}
\begin{document}

\date{}

\newcommand{\Addresses}{{
  \bigskip
  \footnotesize

  \textsc{Department of Mathematics, University at Buffalo}
  \par\nopagebreak
  \textsc{Buffalo, NY 14260-2900, USA}  
  \par\nopagebreak
  \textit{E-mail address}: \texttt{achirvas@buffalo.edu}

  \medskip

  \textsc{Department of Mathematics, University at Buffalo}
  \par\nopagebreak
  \textsc{Buffalo, NY 14260-2900, USA}  
  \par\nopagebreak
  \textit{E-mail address}: \texttt{cusick@buffalo.edu}
  

}}

\maketitle

\begin{abstract}
  Let $(0, a_1, \ldots, a_{d-1})_n$ denote the function $f_n(x_0, x_1, \ldots, x_{n-1})$ of degree $d$ in $n$ variables generated by the monomial $x_0x_{a_1} \cdots x_{a_{d-1}}$ and having the property that $f_n$ is invariant under cyclic permutations of the variables. Such a function $f_n$ is called monomial rotation symmetric (MRS). Much of this paper extends the work on quadratic MRS functions in a $2020$ paper of the authors to the case of binomial RS functions, that is sums of two quadratic MRS functions.  There are also some results for the sum of any number of quadratic MRS functions.
\end{abstract}

\noindent {\em Key words: Boolean function; Rotation symmetric;  Hamming weight; balanced function; cyclotomic polynomial; Hadamard matrix; root of unity}

\vspace{.5cm}

\noindent{MSC 2020: 94A60; 06E30; 94C99; 11R18; 11T06; 11R29; 15B34; 15A18}




\section{Introduction}\label{se:int}

Applications of Boolean functions to cryptography and coding theory have been studied for decades (see the books \cite{carl, cs-bk}).  A Boolean function in $n$ variables is rotation symmetric (RS) if it is invariant under powers of the cyclic shift $\rho(x_1, \ldots..., x_n) = (x_2, \ldots, x_n, x_1)$. An RS function is said to be monomial rotation symmetric (MRS) if it is generated by applying powers of $\rho$ to a single monomial.  Since the definition of RS functions was introduced in $1999$ \cite{hash}, the set of RS functions has been shown to be an extremely fertile source of functions with useful cryptographic properties (\cite[Section 6.2]{cs-bk} gives information about this).  This paper continues the detailed investigation of the quadratic RS functions which was initiated in \cite{cc-bal}.

We will need some basic definitions for a Boolean function $b(x_1, \ldots, x_n)=b(\vec{x})=b_n$ in $n$ variables. We define $F_n$ to be the vector space of dimension $n$ over the finite field $\bF_2$ with $2$ elements. When addition mod $2$ is clear from the context we use $+,$ but if we wish to emphasize the fact that addition is being done mod $2$ we will use $\oplus.$ We also use $\oplus$ for the mod $2$ addition of two binary $m$-tuples.  We use $deg~ b$ for the degree of $b$ so $deg ~b_n = n.$ The (Hamming) weight $wt(b_n)$ is the number of $1$'s in the value set of $b_n.$ Function $b_n$ is said to be {\it balanced} if $wt(b_n) =2^{n-1}.$

We shall need the notions of the {\it Walsh spectrum} and {\it nonlinearity} of a Boolean function $b_n,$ both of which are extremely important in cryptography.  The Walsh spectrum is the set of the distinct (integer) values of the {\it Walsh transform} (sometimes called the Hadamard transform in the literature) $W(b_n)(\vec{y}),$ which is defined by

\begin{equation}
\label{Wdef}
W(b_n)(\vec{y}) =  \sum_{\vec{x} \in F_n}   (-1)^{b(\vec{x})+ \vec{x} \cdot \vec{y}}.
\end{equation}
Here $\vec{x} \cdot \vec{y}$ is the usual dot product of two vectors, computed in 
$\bF_2.$
The integers $W(b_n)(\vec{0})$ are related to $wt(b_n)$ because of the elementary identity
\begin{equation}
\label{wtWlsh}
W(b_n)(\vec{0}) = 2^n - 2wt(b_n).
\end{equation}
The nonlinearity of $b_n$ is defined in terms of the {\it distance} $d(f,g)$ between two Boolean functions $f$ and $g,$ which is given by
\begin{equation*}
d(f, g) = wt(f \oplus g).
\end{equation*}
The nonlinearity $N(f)$ of $b$ is
\begin{equation*}
N(b) =  \min_{\text{deg~g} \leq 1}  d(b, g).
\end{equation*}
As usual, we say that a function $g$ with $deg ~g \leq 1$ is an {\em affine} function.

It is important that Boolean functions used in cryptography have large nonlinearity. 
It is an old result  \cite[Theorem 2.31, p. 22]{cs-bk} that any $b_n$ satisfies the inequality
\begin{equation}
\label{nlbd}
N(b_n) \leq 2^{n-1} - 2^{(n-2)/2}.
\end{equation}
 If $n$ is even, then there exist functions $f$ for which $N(f)$ attains the maximum
 value in \Cref{nlbd}, and these functions are called {\it bent}. 
 
Equality in \Cref{nlbd} is obviously impossible if $n$ is odd, but a sharp upper bound for $N(b_n)$ in this case is not known
for general odd $n.$   The nonlinearity  $2^{n-1} - 2^{(n-1)/2},$ and nothing larger, can be attained for any odd $n$ with a suitably chosen quadratic function in $n$ variables (see \cite[Lemma 5, p. 429]{kph} for a more general result), and so this value is often referred to as the {\it quadratic bound} in the literature.

The Walsh spectrum of $b_n$  determines the nonlinearity of $b_n$ because of the following lemma.
\begin{lemma}
\label{nlf}
The Walsh spectrum of a Boolean function $b_n$ determines the nonlinearity by

$$N(b_n) = 2^{n-1} - \frac{1}{2} \max_{\vec{y} \in F_n} |W(b_n)(\vec{y})|.$$
\end{lemma}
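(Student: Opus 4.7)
The plan is to reduce the problem to a direct computation of $d(b_n, \ell)$ for each affine function $\ell$ in terms of the Walsh transform, then optimize.

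First I would parametrize the affine functions on $F_n$ as $\ell_{\vec{y}, c}(\vec{x}) = \vec{x} \cdot \vec{y} + c$ for $\vec{y} \in F_n$ and $c \in \bF_2$. The key observation is that the quantity
\[
W(b_n)(\vec{y}) = \sum_{\vec{x} \in F_n} (-1)^{b(\vec{x}) + \vec{x} \cdot \vec{y}}
\]
simply counts agreements minus disagreements between $b_n(\vec{x})$ and $\vec{x}\cdot\vec{y}$. Writing $A = \#\{\vec{x} : b(\vec{x}) = \vec{x}\cdot\vec{y}\}$ and $D = \#\{\vec{x} : b(\vec{x}) \neq \vec{x}\cdot\vec{y}\}$, so that $A+D = 2^n$ and $D = d(b_n, \ell_{\vec{y},0})$, one reads off
\[
W(b_n)(\vec{y}) = A - D = 2^n - 2\, d(b_n, \ell_{\vec{y},0}),
\]
which is essentially the proof of identity~\Cref{wtWlsh} applied to $b_n \oplus \ell_{\vec{y},0}$.

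Next I would handle the constant term $c = 1$: flipping $c$ complements the affine function, and hence $d(b_n, \ell_{\vec{y},1}) = 2^n - d(b_n, \ell_{\vec{y},0})$. Combining these two cases gives
\[
d(b_n, \ell_{\vec{y},0}) = 2^{n-1} - \tfrac{1}{2} W(b_n)(\vec{y}), \qquad
d(b_n, \ell_{\vec{y},1}) = 2^{n-1} + \tfrac{1}{2} W(b_n)(\vec{y}).
\]
For fixed $\vec{y}$, the smaller of the two distances is $2^{n-1} - \tfrac{1}{2}|W(b_n)(\vec{y})|$.

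Finally I would take the minimum over all $\vec{y}$, which converts the minimum of distances into the maximum of $|W(b_n)(\vec{y})|$, yielding the stated formula for $N(b_n)$. No step is really an obstacle here: the only point requiring a small amount of care is bookkeeping the signs when relating $W(b_n)(\vec{y})$ to the agreement/disagreement count, and noting that the full affine family (including the constant $1$) is exhausted by the pairs $(\vec{y}, c)$, so that the absolute value is legitimate.
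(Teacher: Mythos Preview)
Your proof is correct and is the standard direct argument. The paper itself does not give a proof at all: it simply cites the result as well known (\cite[Theorem 2.21, p.~17]{cs-bk}), so your write-up is strictly more informative than what appears there.
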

\begin{proof}
This is well known, see \cite[Theorem 2.21, p. 17]{cs-bk}.
\end{proof}
It follows from \Cref{nlf} and \Cref{nlbd} that 
$\max_{\vec{y} \in F_n} |W(b_n)(\vec{y})| \geq 2^{n/2}$ for all $\vec{y}$ and in fact equality holds for all $\vec{y}$ if and only if $b_n$ is bent, by the well known Parseval equation (see \cite[Corollary 2.23, p. 17]{cs-bk})
\begin{equation} \label{pars}
 \sum_{\vec{y} \in F_2} W(b_n)(\vec{y})^2 = 2^{2n}.
\end{equation}

A Boolean function $b_n$ is said to be {\it plateaued} if every integer in the Walsh spectrum is either $0$ or $\pm M,$ where $M$ is some positive integer. It is well known that every quadratic function is plateaued (a proof is given in \cite[p. 260]{carl}). We follow the terminology of \cite[p. 1308]{cc-bal} and say that a quadratic function $b_n$  is {\it v-plateaued} if every Walsh value is either $0$ or $\pm 2^{(n+v)/2}.$ The same terminology is used in \cite[p. 166]{amt}, but with $s$ instead of $v.$ We shall call $v$ the {\it plateau parameter} and of course for any specific Boolean quadratic function $b_n$ the plateau parameter $v=v(n)$ will depend on $n.$

Two Boolean functions $f(\vec{x})$ and $g(\vec{x})$ in $n$ variables are said to be {\it affine equivalent} if there exists an invertible matrix $A$ with entries in 
$\bF_2$ and ${\bf b} \in F_n$ such that $ f((\vec{x}))=g(A(\vec{x})\oplus {\bf b}).$
In general, determining whether two Boolean functions are affine equivalent is difficult, even in the simplest cases.  However, there is a simple test for the quadratic functions in \cite[Lemma 1.1]{cc-bal}, which we repeat below.
\begin{lemma}
\label{quadeq}
Two quadratic Boolean functions $f$ and $g$ in $n$ variables are affine equivalent if and only if $wt(f) = wt(g)$ and $N(f) = N(g).$
\end{lemma}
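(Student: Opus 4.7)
The plan is to handle the easy direction by transport under affine substitution, and the nontrivial direction by reducing to a small list of canonical forms and distinguishing those forms by weight and nonlinearity.

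\textbf{Forward direction.} Suppose $g(\vec{x}) = f(A\vec{x}\oplus\vec{b})$ with $A$ invertible. Since $T\colon\vec{x}\mapsto A\vec{x}\oplus\vec{b}$ is a bijection of $F_n$, the value multisets of $f$ and $g$ coincide, hence $wt(g)=wt(f)$. For any affine $h$, the function $\tilde h(\vec{y}) := h(T^{-1}\vec{y})$ is again affine, and $h\mapsto\tilde h$ is a bijection of the set of affine functions in $n$ variables; a change of variable gives $wt(g\oplus h) = wt((f\oplus\tilde h)\circ T) = wt(f\oplus\tilde h)$, so $N(g) = N(f)$.

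\textbf{Backward direction.} For the nontrivial direction I would invoke the classification of quadratic Boolean functions up to affine equivalence. The alternating bilinear form
\[
B_f(\vec{u},\vec{v}) = f(\vec{u}\oplus\vec{v})\oplus f(\vec{u})\oplus f(\vec{v})\oplus f(\vec{0})
\]
has some even rank $2k$, and a suitable linear change of coordinates reduces $f$ to $\sum_{i=1}^k x_{2i-1}x_{2i} + L(\vec{x}) + c$ for some linear $L$ and constant $c\in\bF_2$. The portion of $L$ supported on $x_1,\dots,x_{2k}$ gets absorbed into the quadratic part by a translation $\vec{x}\mapsto\vec{x}\oplus\vec{b}$ with a suitable $\vec{b}$ (at the cost of modifying $c$); the portion outside, if nonzero, can be mapped to the single coordinate $x_{2k+1}$ by a further linear substitution among $x_{2k+1},\dots,x_n$, after which any leftover constant can be absorbed into that coordinate. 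Only three canonical representatives per rank $2k$ remain:
\[
f_a = \sum_{i=1}^k x_{2i-1}x_{2i},\qquad f_b = f_a \oplus 1,\qquad f_c = f_a \oplus x_{2k+1}.
\]

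\textbf{Separating the classes.} A direct Walsh computation that factors over coordinate pairs yields $\max_{\vec{y}}|W(f_\bullet)(\vec{y})| = 2^{n-k}$ for each $\bullet\in\{a,b,c\}$, so by \Cref{nlf} all three have nonlinearity $2^{n-1}-2^{n-k-1}$, meaning $N$ determines $k$. The weights, computed either directly or via \Cref{wtWlsh} applied at $\vec{y}=\vec{0}$, are $2^{n-1}-2^{n-k-1}$, $2^{n-1}+2^{n-k-1}$, and $2^{n-1}$ respectively, pairwise distinct once $k\ge 1$; hence weight distinguishes the three forms at fixed $k$, and the pair $(wt,N)$ identifies the affine-equivalence class. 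The main obstacle is the normal-form step of the backward direction: chaining a symplectic reduction of the quadratic part with a translation that absorbs the linear part into the quadratic one is standard linear algebra over $\bF_2$, but it requires care in characteristic two, where translations couple the quadratic, linear, and constant parts in a way with no odd-characteristic analogue.
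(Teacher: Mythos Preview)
Your argument is correct. The forward direction is handled cleanly, and the backward direction is the standard Dickson normal-form reduction over $\bF_2$: symplectic reduction of the polar form, absorption of the linear part supported on the symplectic coordinates via translation, normalization of any residual linear part to a single extra coordinate, and then separation of the three resulting representatives by direct Walsh computation. The weight and nonlinearity values you quote are right, and they do distinguish the classes; the only small omission is the boundary case $2k=n$, where $f_c$ simply does not arise, but this causes no difficulty.

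As for comparison with the paper: the paper does not actually prove the lemma. It remarks that the result is well known and cites \cite[Lemma~2.3]{cus_aff-cub} for a proof in the literature. So your proposal supplies a self-contained argument where the paper gives only a reference; the approach you take (canonical forms for quadratic Boolean functions, followed by an explicit invariant computation) is exactly the kind of argument one would expect the cited source to contain.
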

\begin{proof}
This result has been well known for a long time, but there does not seem to be a proof in the literature before the one given in
 \cite[Lemma 2.3, p. 5068]{cus_aff-cub}.
\end{proof}

It is worth noticing that \Cref{wtWlsh}, \Cref{quadeq} and the fact that all quadratic functions are $v-$plateaued with $0 \leq v \leq n-2$ ($v=0$ occurs if and only if the function is bent) imply the following lemma.

\begin{lemma} \label{quadwt}
All possible weights for a quadratic RS function in $n$ variables are $2^{n-1}$
(if the function is balanced) and $2^{n-1} \pm 2^j,~(n/2)-1 \leq j \leq n-2.$
\end{lemma}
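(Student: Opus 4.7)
The plan is to translate the $v$-plateaued structure of a quadratic Boolean function directly into a weight computation via \Cref{wtWlsh}. First, I invoke the quoted fact that every quadratic $b_n$ is $v$-plateaued for some integer $v$ with $0 \le v \le n-2$ (the upper bound is forced because $|W(b_n)(\vec{y})|=2^n$ for some $\vec{y}$ would make $b_n$ affine, hence not genuinely quadratic). Consequently $W(b_n)(\vec{0})$ lies in $\{0,\ \pm 2^{(n+v)/2}\}$, and substituting into \Cref{wtWlsh} gives
\[
wt(b_n) \;=\; 2^{n-1} - \tfrac{1}{2}W(b_n)(\vec{0}).
\]
Reading off the three cases: either $wt(b_n)=2^{n-1}$ (the balanced case corresponding to $W(b_n)(\vec{0})=0$, which by the $v$-plateaued assumption forces $v=0$ only when $n$ is even), or $wt(b_n) = 2^{n-1} \mp 2^{(n+v-2)/2}$.

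Setting $j := (n+v-2)/2$, the permissible range $0 \le v \le n-2$ translates into $(n/2)-1 \le j \le n-2$, which is precisely the range claimed in the statement. The parity constraint $v \equiv n \pmod 2$, forced because Walsh values are integers, never pushes the minimum of $j$ below the claimed bound: for $n$ even the minimum $v=0$ gives $j = n/2-1$ exactly (the bent regime), while for $n$ odd the minimum admissible $v$ is $1$, giving $j=(n-1)/2 \ge n/2 - 1$.

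The argument is short and self-contained; the rotation-symmetric hypothesis is not used, and \Cref{quadeq} likewise plays no role here (its natural use is in the converse direction, classifying quadratics up to affine equivalence from the pair $(wt,N)$). The main thing to verify carefully is the endpoint analysis for $j$, in particular the integrality/parity step that rules out fractional $(n+v)/2$ for $n$ odd; once that is handled, the conclusion is an immediate rewriting of \Cref{wtWlsh}.
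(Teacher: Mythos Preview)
Your argument is correct and matches the paper's own justification: the lemma is recorded there as an immediate consequence of \Cref{wtWlsh} together with the fact that every quadratic is $v$-plateaued with $0\le v\le n-2$. Your observation that \Cref{quadeq} plays no role is accurate---the paper lists it among the ingredients, but the inclusion of weights in the stated list needs only the Walsh-value constraint and \Cref{wtWlsh}, exactly as you wrote. (The parenthetical about ``forces $v=0$ only when $n$ is even'' is muddled and should be deleted: $W(b_n)(\vec 0)=0$ is compatible with any $v$, since $0$ is always an admissible Walsh value for a $v$-plateaued function; but this aside does not affect your main line.)
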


Some results for quadratic RS functions $b_n$ in this paper apply only if $n$ is large enough so that bent functions do not occur (see the discussion of this point in \cite[p. 1308]{cc-bal}); we will not mention this issue again in stating the results below. Generally assuming $n \geq 2t+1$ (which eliminates the bent function $(0,t)_{2t}$) is what is needed if the function being considered contains the monomial $(0,t)_n.$

It is clear from \Cref{quadeq} and \Cref{quadwt} that the affine equivalence class of any quadratic RS function $b_n$ is determined if we know whether or not $b_n$ is balanced, we know the parameter $v(n)$ and we know the $\pm$ signs in \Cref{quadwt}.  If $b_n$ is balanced, then we only need 
$v(n)$ to determine $N(b_n)$ by \Cref{nlf}; if $b_n$ is not balanced, then we need $v(n)$ to determine both $wt(b_n)$ and $N(b_n).$

A complete solution to the problem of finding the equivalence class to which any MRS function belongs was given in \cite[Section 4]{cc-bal} and a formula for the number of affine equivalence classes for the quadratic MRS functions $(0,t)_n$ was given in \cite[Theorem 4.3]{cc-bal}. In particular,  \cite[Lemma 4.2]{cc-bal} gives a precise description of the values of $n \geq 2t+1$ for which $(0,t)_n$ is balanced. Also the period length $2t$ for the list of plateau parameters $\{v(n): n \geq 2t+1\}$ and details about the structure (including the largest two values) of that period were given in \cite[Lemma 4.1]{cc-bal}. Further, 
\cite[Theorem 5.2]{cc-bal} (which we repeat as \Cref{th:prd} below) proves that the list of $v(n)$ values is periodic for any quadratic RS function $b_n,$ and gives the largest
 $v(n).$  We use notation $V(b)$ for the period length. 

It is important to recall that the sequence of weights for {\it any} Boolean RS function satisfies a linear recurrence with integer coefficients.  A proof of this is in \cite{cus_rec}, where a {\it rules matrix} is defined with the property that the recursion polynomial associated with the linear recurrence for a given RS function can be chosen as either the minimal polynomial or the characteristic polynomial of the rules matrix.  A detailed description of an algorithm for computing these rules matrices for any RS function is in \cite{cus_rec-xv}, along with a Mathematica program which computes the rules matrix.  In both of these papers, specializing to the case of quadratic functions dramatically simplifies the algorithm and its proof.  A full description of the rules matrices for MRS functions is in \cite[Theorem 3.1]{cc-bal} and a similar description for the more complicated case of binomial RS quadratic functions is in \Cref{se:binrules}.

In \Cref{se:binom} of the present paper, we extend the results in  
\cite[Example 5.1]{cc-bal} on binomial quadratic RS functions $(0,j)_n + (0,i)_n, ~j<i,$ in two ways. \Cref{th:whenbinombal} is a precise description of the values of $n$ for which the binomial functions are balanced and \Cref{th:gij} gives the simple formula 
$2\cdot \mathrm{lcm}(i+j,i-j)$ for the period length of the list $v(n).$
\Cref{se:binom} also gives various results on when sums of $3$ or more quadratic RS functions are balanced. \Cref{se:grt2} explains an algorithm for computing the period of the list of $v(n)$ values for the sum of any number of quadratic RS functions.  \Cref{se:scaled1} generalizes  
\cite[Proposition 2.8]{cc-rec}, used in the study of the rules matrices for MRS functions, with the expectation of applications to the study of rules matrices for binomial RS functions, which are described in detail in \Cref{se:binrules}.

\subsection*{Acknowledgements}

A.C. is partially supported by NSF grant DMS-2001128. 

\section{Preliminaries}\label{se.prel}

We shall use the formula for $v(n)$ in the lemma below. We use $\gcd_2$ to indicate a gcd taken mod $2.$
\begin{lemma}
 \label{vvals1}
 Given the quadratic RS Boolean function
 \begin{equation} \label{Qa}
   Q=Q(a_1, a_2, \ldots, a_{[(n-1)/2]})=\sum_{i=1}^{[(n-1)/2]} a_i (0,i)_n
 \end{equation}
 with each $a_i$ in $\{0,1\}$ define
 \begin{equation}
 \label{fncAn}
A_n(x) = \sum_{i=1}^{[(n-1)/2]} a_i (x^{i} +x^{n-i}).
 \end{equation}
 Then the $v$-values for $Q$ are given by 
 \begin{equation}
 \label{vng}
 v(n) = \mathrm{deg} ~\mathrm{gcd_2}(x^n - 1, A_n(x)),
 \end{equation}
 where the greatest common divisor is taken mod $2.$
 \end{lemma}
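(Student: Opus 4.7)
The strategy is to pass from $Q$ to its associated symmetric bilinear form $B(\vec{x}, \vec{y}) := Q(\vec{x} + \vec{y}) + Q(\vec{x}) + Q(\vec{y})$ on $\bF_2^n$. A standard computation (expand $|W(Q)(\vec{y})|^2$ as a double sum, substitute $\vec{x}_2 = \vec{x}_1 + \vec{u}$, and use that $Q$ restricts to an $\bF_2$-linear functional on $\mathrm{rad}(B)$) gives that the nonzero Walsh values of any quadratic Boolean function have absolute value $2^{n - r/2}$, where $r = \mathrm{rank}_{\bF_2}(B)$. Matching against $2^{(n+v)/2}$ in the definition of $v$-plateauedness yields $v(n) = n - r$, so it suffices to show $r = n - \deg \gcd_2(x^n - 1, A_n(x))$.

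For the matrix of $B$, a direct expansion gives $B(\vec{e}_k, \vec{e}_l) = c_{kl}$ for $k \ne l$ (and $0$ otherwise), where $c_{kl}$ is the coefficient of $x_k x_l$ in the algebraic normal form of $Q$. Because $(0,i)_n = \sum_{k=0}^{n-1} x_k x_{k+i \bmod n}$ picks out exactly the pairs $\{k,l\}$ at cyclic distance $i$ (and the restriction $1 \le i \le [(n-1)/2]$ excludes $i = n/2$, so each such pair arises from a unique shift), the matrix $\wt{M}$ of $B$ is the symmetric circulant whose first row, read as a polynomial in $\bF_2[x]$ of degree less than $n$, is exactly
\[
\sum_{j=1}^{n-1} a_{\min(j,\, n-j)}\, x^j \;=\; \sum_{i=1}^{[(n-1)/2]} a_i \bigl(x^i + x^{n-i}\bigr) \;=\; A_n(x).
\]

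It then remains to invoke the classical correspondence between circulants and the ring $\bF_2[x]/(x^n-1)$: up to the involution $x \mapsto x^{-1}$ (which is harmless since $A_n$ is palindromic), $\wt{M}$ is the matrix of multiplication by $A_n(x)$ on that quotient, so its kernel is the annihilator of $A_n(x)$, namely the principal ideal generated by $(x^n-1)/\gcd_2(x^n-1, A_n(x))$, which has $\bF_2$-dimension $\deg \gcd_2(x^n-1, A_n(x))$. Therefore $r = n - \deg \gcd_2(x^n - 1, A_n(x))$, and combining with the first paragraph gives the claimed formula.

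The main obstacle is simply the bookkeeping required to pin down the circulant polynomial as exactly $A_n(x)$; the Walsh-value/rank identity for quadratic Boolean forms is well documented (e.g.\ in \cite{carl}) and can be cited rather than reproved.
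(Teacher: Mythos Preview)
Your argument is correct. The paper does not actually prove this lemma: its ``proof'' consists entirely of a pointer to \cite[p.~269]{amt} (together with the remark that the result is well known, with earlier references dating back to 2006). By contrast, you supply a self-contained sketch via the associated symmetric bilinear form $B$, the Walsh-value/radical identity $v(n)=\dim\mathrm{rad}(B)$, and the identification of the matrix of $B$ with the circulant determined by $A_n(x)$; this is the standard route and is essentially what the cited sources do. The only points worth tightening are purely cosmetic: (i) when you say ``$\wt{M}$ is the matrix of multiplication by $A_n(x)$'', it is worth recording explicitly that the standard circulant with first-row polynomial $p(x)$ represents multiplication by $p(x^{-1})$ on $\bF_2[x]/(x^n-1)$, and that palindromy of $A_n$ is exactly what makes $A_n(x^{-1})\equiv A_n(x)$ there; and (ii) the claim that every ideal of $\bF_2[x]/(x^n-1)$ is principal (so that the annihilator computation goes through) is immediate from $\bF_2[x]$ being a PID, but you might say so. Neither affects correctness.
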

\begin{proof}
A proof is given in \cite[p. 269]{amt}, where $s$ is used instead of our  notation $v(n).$ The trace form of our functions \Cref{Qa} is used in \cite{amt} but by
\cite[Remark 1, p. 268]{amt} this does not matter. There are also four references, dating back to $2006,$ to this well-known lemma given in \cite[p. 269]{amt}.
\end{proof}

For a prime $p$, the {\it $p$-adic valuation} $\nu_p(n)$ of an integer $n$ is the largest $\nu \in \bZ_{\ge 0}$ such that $p^{\nu}$ divides $n$. The term applies more generally to primes in unique factorization domains. For a field $\bF$, for instance, and an irreducible polynomial $p(x)\in \bF[x]$, we can speak of the $p$-adic valuation $\nu_p(-)$:
\begin{equation*}
  \text{for }q\in \bF[x],\ \nu_p(q) = \max\{\nu\in \bZ_{\ge 0}\ |\ p^{\nu}\text{ divides }q\}.
\end{equation*}

We will frequently work with rings $\bF[x^{\pm 1}]$ of {\it Laurent} polynomials instead. In particular, if we write function $Q$ in \Cref{fncAn} in the form
\begin{equation} \label{AnJ}
  A_n(x)=\sum_{i=1}^{J:=J(Q)} a_i(x^{i}+x^{n-i}),
\end{equation}
where $J$ is as small as possible,
then we can define the corresponding Laurent polynomial by
\begin{equation}\label{fncA}
  A(x)=\sum_{i=1}^{J:=J(Q)} a_i(x^{i}+x^{-i}).
\end{equation}
Now we can state the next theorem (note we assume $n \geq 2J+1$ in order to avoid the short function $(0, J)_{2J}$).
\begin{theorem} \label{th:prd}
 Given any $Q=Q(a_1, a_2, \ldots, a_{[(n-1)/2]}),$ the period (of length $V(Q)$) for the list of $v$-values beginning with $n=2J(Q)+1$ has a unique largest integer 
 $2J= 2 \max_{a_i \neq 0} i=v(V(Q)).$ The entries in the period are symmetric around this largest integer in the sense that  $v(V(Q)i+r) = v(V(Q)i-r)$ for each 
 $r = 1, 2, ...$ for which both sides of the equation are defined and for each $i$ satisfying $1 \leq j \leq V(Q).$ This gives 
 \begin{equation} \label{vnA}
  v(n) = \mathrm{deg} ~\mathrm{gcd_2}(x^n - 1, A(x)),
 \end{equation}
\end{theorem}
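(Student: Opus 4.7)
The plan is to recast \eqref{vnA} as a statement about the ordinary polynomial $P(x):=x^J A(x)\in\bF_2[x]$ (of degree $2J$, with $P(0)=a_J=1\neq 0$), and then read all three assertions off the factorization of $P$ into irreducibles over $\bF_2$.

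First I would pass from \Cref{vvals1} to \eqref{vnA} by a short Laurent-polynomial calculation: in $\bF_2[x,x^{\pm 1}]$ we have
\begin{equation*}
A_n(x)-A(x)=\sum_{i=1}^{J}a_i\bigl(x^{n-i}-x^{-i}\bigr)=(x^n-1)\sum_{i=1}^{J}a_i x^{-i},
\end{equation*}
so $A_n$ and $A$ differ by a multiple of $x^n-1$. Because $x$ is a unit in the Laurent ring and is coprime to $x^n-1$ in $\bF_2[x]$, the gcd $\gcd_2(x^n-1,A_n(x))$ of \Cref{vvals1} has the same degree as $\gcd_2(x^n-1,P(x))\in\bF_2[x]$, which yields \eqref{vnA}.

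Next I would factor $P(x)=\prod_j q_j(x)^{e_j}$ into powers of distinct monic irreducibles $q_j\in\bF_2[x]$, and for each $j$ let $d_j$ be the multiplicative order of any root of $q_j$ (the smallest positive integer with $q_j\mid x^{d_j}-1$); every $d_j$ is odd because $\overline{\bF_2}^{\times}$ has only elements of odd order. Writing $n=2^{\nu_2(n)}m$ with $m$ odd and using $x^n-1=(x^m-1)^{2^{\nu_2(n)}}$ in characteristic $2$, each $q_j$ appears in $x^n-1$ with multiplicity $2^{\nu_2(n)}$ when $d_j\mid n$, and not at all otherwise. This yields the explicit formula
\begin{equation*}
v(n)=\sum_{j:\,d_j\mid n}\deg(q_j)\cdot\min\!\bigl(e_j,\,2^{\nu_2(n)}\bigr),
\end{equation*}
which is the technical core of the proof.

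From this formula the three assertions are essentially divisibility bookkeeping. Set $L:=\mathrm{lcm}_j d_j$ (odd), $E:=\lceil\log_2\max_j e_j\rceil$, and $V(Q):=2^E L$. The sum above equals $\deg P=2J$ iff $d_j\mid n$ for every $j$ and $2^{\nu_2(n)}\ge e_j$ for every $j$, i.e., iff $V(Q)\mid n$; this gives $v(V(Q))=2J$ and shows that $2J$ is attained by $v$ only at multiples of $V(Q)$, hence uniquely within each period of length $V(Q)$. For $V(Q)$-periodicity, divisibility by $d_j$ is unchanged under $n\mapsto n+V(Q)$ since $d_j\mid V(Q)$; for the multiplicity, either $\nu_2(n)\ge E$ (both sides saturate at $e_j$) or $\nu_2(n)<E=\nu_2(V(Q))$, in which case $\nu_2(n+V(Q))=\nu_2(n)$. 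The symmetry $v(V(Q)i+r)=v(V(Q)i-r)$ then reduces, via periodicity, to $v(V(Q)-r)=v(r)$, which follows from the same two observations applied with $V(Q)-r$ in place of $n+V(Q)$. The main obstacle is the careful treatment of multiplicities: $n\mapsto 2^{\nu_2(n)}$ is not itself periodic, and periodicity of $v(n)$ emerges only because this quantity is paired with the truncation $\min(e_j,\cdot)$, with $E$ chosen just large enough to saturate every $e_j$.
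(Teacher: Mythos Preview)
Your argument is correct and essentially matches the paper's approach. The paper's own proof here is just a citation to \cite[Theorem~5.2]{cc-bal} together with the one-line remark that $x^n=1$ in any splitting field of $x^n-1$, so $A_n$ may be replaced by $A$; the substantive content---factor $P(x)=x^JA(x)$, track the odd orders $d_j$ of the irreducible factors and their multiplicities $e_j$, and read off $V(Q)=2^E\,\mathrm{lcm}_j d_j$---is exactly what the paper sketches later in the proof of \Cref{th:gij} and what you have written out in full. Your passage from $A_n$ to $A$ via $A_n-A=(x^n-1)\sum a_ix^{-i}$ is a slightly more explicit version of the paper's splitting-field remark, and your treatment of periodicity (handling the non-periodic $\nu_2(n)$ through the truncation $\min(e_j,2^{\nu_2(n)})$) fills in the case analysis that the citation leaves implicit.
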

\begin{proof}
This is \cite[Theorem 5.2, p. 1012]{cc-bal} restated in our notation. We can replace \Cref{vng} with \Cref{vnA} because in every splitting field of $x^n-1$ over $\bF_2$ $x^n$ is identically $1,$ so we can remove the $n$ from $A_n(x).$
\end{proof}

\section{Binomial rotation symmetric functions}\label{se:binom}

\subsection{Periods}\label{subse:per}

In light of \cite[Theorem 5.2]{cc-bal} (and its proof), we can revisit the discussion of the periods $V(g),$ where $g = g_i = (0,1)_n + (0, i)_n$ in \cite[Example 5.1]{cc-bal}. We begin with a more general result.

\begin{theorem}\label{th:gij}
  Let $1\le j<i$ be two positive integers. The list of $v$-values for the sum $(0,j)_n + (0,i)_n$ of two MRS quadratic functions has period 
  $2\cdot\mathrm{lcm}(i+j,i-j)$. 
\end{theorem}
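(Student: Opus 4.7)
The plan is to pass from \Cref{th:prd} to a cyclotomic-factorization computation in $\bF_2[x]$, extract an explicit formula for $v(n)$ as a sum indexed by odd divisors of $\mathrm{lcm}(i+j,i-j)$, read the period off the formula, and pin down minimality via the unique maximum.

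First, since $x$ is a unit modulo $x^n-1$, multiplying the Laurent polynomial $A(x)=x^j+x^{-j}+x^i+x^{-i}$ by $x^i$ does not change the gcd; in $\bF_2[x]$ this yields
\begin{equation*}
x^i A(x) \;=\; (x^{i+j}+1)(x^{i-j}+1) \;=\; (x^{d_1}-1)(x^{d_2}-1), \qquad d_1:=i+j,\ d_2:=i-j,
\end{equation*}
so $v(n)=\deg\gcd_2\bigl(x^n-1,(x^{d_1}-1)(x^{d_2}-1)\bigr)$. Next I would decompose via cyclotomic polynomials $\Phi_m\in\bF_2[x]$ for odd $m$: writing $n=2^a n'$ and $d_k=2^{a_k}d_k'$ with $n',d_k'$ odd, the irreducible factors of any fixed $\Phi_m$ over $\bF_2$ all appear in $x^{\ell}-1$ with the same multiplicity $2^{\nu_2(\ell)}[m\mid(\text{odd part of }\ell)]$. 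Grouping by $m$ and using $\sum_{m\mid d}\phi(m)=d$, one obtains
\begin{equation*}
v(n) \;=\; \sum_{\substack{m\text{ odd}\\ m\,\mid\,\mathrm{lcm}(d_1',d_2')}} \phi(m)\cdot[m\mid n]\cdot \min\bigl(2^a,\,E(m)\bigr), \qquad E(m):=2^{a_1}[m\mid d_1']+2^{a_2}[m\mid d_2'],
\end{equation*}
with $[\cdot]$ the Iverson bracket; the maximum possible value $\sum_m \phi(m)E(m)=2^{a_1}d_1'+2^{a_2}d_2'=d_1+d_2=2i$ matches the sharp bound $2J$ from \Cref{th:prd}.

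To show $V:=2\,\mathrm{lcm}(d_1,d_2)=2^{\max(a_1,a_2)+1}\,\mathrm{lcm}(d_1',d_2')$ is a period, observe that each indicator $[m\mid n]$ in the sum is $V$-periodic because the relevant $m$ all divide $V$. For the $\min$ term there are two cases: if $\nu_2(n)<\nu_2(V)$ then $\nu_2(n+V)=\nu_2(n)$ and nothing changes; if $\nu_2(n)\geq\nu_2(V)$ then the uniform bound $E(m)\leq 2^{a_1}+2^{a_2}\leq 2^{\nu_2(V)}$ forces $\min(2^{\nu_2(n)},E(m))=E(m)$, again independent of the precise $2$-adic valuation.

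For minimality I would show $v(n)=2i$ iff $V\mid n$: the "if" direction is immediate from the formula, and the "only if" direction saturates each summand, requiring $\mathrm{lcm}(d_1',d_2')\mid n$ from the indicators and $2^{\nu_2(n)}\geq E(1)=2^{a_1}+2^{a_2}$, i.e.\ $\nu_2(n)\geq\max(a_1,a_2)+1$, hence $V\mid n$. If the minimal period $V_{\min}$ properly divided $V$, then for any large $n$ with $V\mid n$ one would have $v(n+V_{\min})=v(n)=2i$, forcing $V\mid V_{\min}$, a contradiction. The main obstacle will be the $2$-adic bookkeeping in both the periodicity and maximality arguments: tracking when adding $V$ preserves $\nu_2(n)$, and invoking the sharp bound $E(m)\leq 2^{\nu_2(V)}$ at exactly the right threshold. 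Everything else is routine once the cyclotomic decomposition is in place.
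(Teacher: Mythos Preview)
Your argument is correct and takes a genuinely different route from the paper's. Both proofs start from the same factorization $A(x)\sim(x^{i+j}-1)(x^{i-j}-1)$ over $\bF_2$, but diverge from there. The paper invokes the general description of the period from \cite[Theorem 5.2]{cc-bal} as $2^t m$, where $m$ is the $\mathrm{lcm}$ of the multiplicative orders of the roots of $A(x)$ and $2^t$ is the least power of $2$ dominating all root multiplicities; it then simply reads off $m=\mathrm{lcm}(\alpha,\beta)$ (odd parts) and $2^t=2^{\max(a,b)+1}$ from the factorization, with the common root $1$ realizing the maximal multiplicity $2^a+2^b$. Your approach instead unpacks the gcd via the cyclotomic decomposition over $\bF_2$, producing an explicit closed formula for every $v(n)$, and then verifies periodicity termwise and minimality through the characterization $v(n)=2i\Leftrightarrow V\mid n$. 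The paper's route is shorter and leans on the machinery already established in \cite{cc-bal}; yours is more self-contained, yields the full $v(n)$ formula as a byproduct, and makes the $2$-adic threshold $E(m)\le 2^{a_1}+2^{a_2}\le 2^{\nu_2(V)}$ explicit rather than implicit in the phrase ``smallest power of $2$ dominating the multiplicities.''
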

\begin{proof}
  The proof of \cite[Theorem 5.2]{cc-bal} explains how the period is to be obtained: it is $2^tm$, where $m$ is the least common multiple of the (automatically odd) multiplicative orders of the roots of $A(x)$ and $2^t$ is the smallest power of $2$ dominating the multiplicities of all of those roots. In that proof, the lines from below equation (5-4) in the proof of Theorem 5.2 to equation (5-6) should be replaced by the lines below, down to the second new equation \eqref{Kprop2}.
  
    In fact, the {\it smallest} 
    \begin{equation*}
      K:=2^t m,\quad m\text{ odd}
    \end{equation*}
    for which $x^K-1$ is divisible by $A(x)$ is obtained for
    \begin{equation*}
      m=\mathrm{lcm}\left(\text{multiplicative orders of the roots of $A(x)$ in the algebraic closure }\overline{\bF_2}\right)
    \end{equation*}
    and the smallest $t$ such that $2^t$ dominates the multiplicity of every root of $A(x)$.

    We write `$\sim$' to indicate that Laurent polynomials are equal up to invertible scaling (e.g. by $x^{\pm 1}$) . Note that $K$ is a period for the sequence $\{v(n)\}$, since
    \begin{align*}
      x^K &= 1\text{ in }\bF_2[x^{\pm 1}]/(A(x))\\
          &\Longrightarrow \mathrm{gcd}\left(x^n-1,\ A(x)\right) \sim \mathrm{gcd}\left(x^{n+tK}-1,\ A(x)\right)\\
          &\Longrightarrow v(n) = v(n+tK),\ \forall t\in \mathbb{Z}.
    \end{align*}
    We see that
    \begin{equation}\label{Kprop2}
      K \text{ is the {\it smallest} period for the sequence } \{v(n)\},
    \end{equation}
    since it was chosen so that 
    \begin{equation}\label{Kprop1}
      A(x) \text{ divides } x^n-1 \text{ if and only if } x^K-1 \text{ does, if and only if } K \text{ divides } n.
    \end{equation}

  The polynomial $A(x)$ is
  \begin{align*}
    x^j+x^{-j} + x^i + x^{-i}&\sim x^{2i}+x^{i+j}+x^{i-j}+1\\
                             &=(x^{i+j}+1)(x^{i-j}+1)\numberthis\label{eq:xii}
  \end{align*}
  (with `$\sim$' as in the proof of \cite[Theorem 5.2]{cc-bal}, meaning `equal up to invertible scaling'). Consider the unique decompositions
  \begin{equation*}
    i+j = 2^a\alpha,\quad i-j = 2^b\beta,\quad \alpha,\beta\text{ odd}.
  \end{equation*}
  The roots of $x^{i+j}+1$ and $x^{i-j}+1$ have orders dividing $\alpha$ and $\beta$ respectively, so the smallest order that will accommodate both is $\mathrm{lcm}(\alpha,\beta)$. As for multiplicities in \Cref{eq:xii}, at the very most, a root might be common to both polynomials, in which case its multiplicity will be $2^a+2^b$. The bound is of course achieved, for the common root 1.

  It follows that
  \begin{align*}
    2^t =\text{smallest power of 2 dominating }2^a+2^b = 2^{\max(a,b)+1},
  \end{align*}
  whence
  \begin{equation*}
    2^tm = 2^{\max(a,b)+1}\mathrm{lcm}(\alpha,\beta) = 2\cdot 2^{\max(a,b)}\mathrm{lcm}(\alpha,\beta) = 2\cdot \mathrm{lcm}(i+j,i-j). 
  \end{equation*}
  This concludes the proof. 
\end{proof}

Finally, to return to \cite[Example 5.1]{cc-bal}:

\begin{corollary}\label{cor:viper}
  The period of the sequence $V(g_i)$ of \cite[Example 5.1]{cc-bal} is
  \begin{equation*}
    \begin{cases}
      2(i+1)(i-1)\quad &\text{if $i$ is even}\\
      (i+1)(i-1)\quad &\text{if $i$ is odd}.
    \end{cases}
  \end{equation*}
\end{corollary}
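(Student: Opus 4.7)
The plan is to apply \Cref{th:gij} directly with $j=1$. Since $g_i = (0,1)_n + (0,i)_n$, the theorem yields $V(g_i) = 2\cdot\mathrm{lcm}(i+1,\, i-1)$. Everything then reduces to evaluating this $\mathrm{lcm}$ in terms of the parity of $i$, using the elementary identity $\mathrm{lcm}(a,b)=ab/\gcd(a,b)$.

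The key observation is that $\gcd(i+1, i-1)$ divides the difference $(i+1)-(i-1)=2$, so this $\gcd$ is either $1$ or $2$. Splitting into cases:

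\textbf{Case 1: $i$ even.} Then both $i+1$ and $i-1$ are odd, so $\gcd(i+1,i-1)=1$, giving $\mathrm{lcm}(i+1,i-1) = (i+1)(i-1)$. Multiplying by the leading factor of $2$ from \Cref{th:gij} produces $V(g_i) = 2(i+1)(i-1)$.

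\textbf{Case 2: $i$ odd.} Then both $i+1$ and $i-1$ are even, so $\gcd(i+1,i-1)=2$, giving $\mathrm{lcm}(i+1,i-1) = (i+1)(i-1)/2$. The leading factor of $2$ cancels the denominator, yielding $V(g_i) = (i+1)(i-1)$.

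There is no real obstacle here: the corollary is a direct specialization of \Cref{th:gij} at $j=1$, together with the trivial parity-based $\gcd$ computation. The only thing worth remarking on is consistency with \cite[Example 5.1]{cc-bal}, which the authors evidently had in mind as the motivating special case.
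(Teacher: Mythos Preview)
Your proof is correct and follows essentially the same approach as the paper: both apply \Cref{th:gij} with $j=1$ and then split on the parity of $i$ to evaluate $\gcd(i+1,i-1)$ as $1$ or $2$, obtaining the $\mathrm{lcm}$ and doubling. Your additional remark that $\gcd(i+1,i-1)$ divides $(i+1)-(i-1)=2$ is a nice justification the paper leaves implicit.
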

\begin{proof}
  This is a direct consequence of \Cref{th:gij}: when $i$ is even we have
  \begin{equation*}
    \mathrm{gcd}(i+1,i-1)=1\Longrightarrow\mathrm{lcm}(i+1,i-1)=(i+1)(i-1),
  \end{equation*}
  while for odd $i$ we have
  \begin{equation*}
    \mathrm{gcd}(i+1,i-1)=2\Longrightarrow\mathrm{lcm}(i+1,i-1)=\frac{(i+1)(i-1)}2.
  \end{equation*}
  Now simply double these quantities, as \Cref{th:gij} indicates.
\end{proof}

\subsection{Balancing}\label{subse:bal}

We will now revisit the issue of whether or not quadratic rotation symmetric (RS for short) Boolean functions are balanced, focusing mostly, in the sequel, on {\it binomial} functions: those of the form $(0,i)+(0,j)$.

Quadratic RS functions
\begin{equation}\label{eq:qsum0i}
  Q:=\sum_{i\in I} (0,i).
\end{equation}
split into two dichotomous groups: those that are, on occasion, balanced (over {\it some} $n$), and those that are not. Some language that reflects this:

\begin{definition}\label{def:balrefr}
  A quadratic RS function \Cref{eq:qsum0i} is
  \begin{itemize}
  \item {\it balance-refractory} if it is not balanced for any $n$;
  \item and {\it balance-friendly} if it {\it is} occasionally balanced.
  \end{itemize}
  By \cite[Theorem 5.3]{cc-bal}, if $Q$ has an even number of terms and is balanced friendly, then it is necessarily balanced precisely for
  \begin{equation*}
    n=2^{\nu}(\mathrm{mod}~2^{\nu+1})
  \end{equation*}
  for some $\nu$. 
\end{definition}

With this in place, we will prove

\begin{theorem}\label{th:whenbinombal}
  For positive integers $j<i$, the RS function
  \begin{equation*}
    Q:=(0,j)+(0,i)
  \end{equation*}
  is balance-friendly if and only if $i$ and $j$ have distinct 2-adic valuations.

  In that case, if $\nu:=\min(\nu_2(i),\nu_2(j))$, $Q$ is balanced precisely over 
  \begin{equation*}
    n=2^{\nu+1}(\mathrm{mod}~2^{\nu+2}).
  \end{equation*}  
\end{theorem}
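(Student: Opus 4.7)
The plan is to combine a balance criterion for quadratic RS functions---obtained by analyzing the alternating bilinear form $B$ associated to $Q$---with the factorization $A(x)\sim(x^{i+j}+1)(x^{i-j}+1)$ used in the proof of \Cref{th:gij}, and then perform a 2-adic case analysis.

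First, I would derive the criterion. Set $d(z):=\mathrm{gcd}(z^n-1,A(z))$ in $\bF_2[z]$ and $q(z):=(z^n-1)/d(z)$; work in $R:=\bF_2[z]/(z^n-1)$. Decomposing $W(Q)(\vec 0)$ over cosets of $\mathrm{Rad}(B)$ shows that $Q$ is balanced iff the restriction $L:=Q|_{\mathrm{Rad}(B)}$ is a nonzero linear functional, where $\mathrm{Rad}(B)\subset\bF_2^n$ is identified with the principal ideal $(q(z))\subset R$. Writing $Q(\vec x)=\phi\bigl((z^i+z^j)\,x(z)x(z^{-1})\bigr)$ with $\phi$ the $z^0$-coefficient map, and noting that $A(z)q(z)=0$ in $R$ forces $H(z):=(z^i+z^j)q(z)q(z^{-1})$ to satisfy $H(z)=H(z^{-1})$, I would check that on the radical the expression collapses to $L(gq)=(s_i+s_j)\,g(1)$, where $s(z):=q(z)q(z^{-1})$; the quadratic cross-terms in $g$ cancel because their symmetric pairs coincide in characteristic~$2$. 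Since $A(1)=(1+1)(1+1)=0$ forces $(z-1)\mid d$, the evaluation $g\mapsto g(1)$ is a well-defined nonzero functional on $R/(d)\cong\mathrm{Rad}(B)$, yielding the criterion: $Q$ is balanced iff $s_i+s_j=1$ in $\bF_2$.

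Next, 2-adic bookkeeping. Writing $i+j=2^a\alpha$ and $i-j=2^b\beta$ with $\alpha,\beta$ odd and using the identities $(i+j)\pm(i-j)\in\{2i,2j\}$, one checks: if $\nu_2(i)\neq\nu_2(j)$ then $a=b=\min(\nu_2(i),\nu_2(j))=:\nu$; if $\nu_2(i)=\nu_2(j)$ then $a\neq b$ with $\min(a,b)=\nu_2(i)+1$. In particular, the $(x+1)$-multiplicity $2^a+2^b$ of $A(x)$ is a power of $2$ exactly in the former case.

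The remaining task is to evaluate $s_i+s_j$. I would use the CRT decomposition $R\cong\prod_p\bF_2[z]/(p(z)^{2^{\nu_2(n)}})$ indexed by irreducible factors $p$ of $x^{n_{\mathrm{odd}}}-1$, and analyze each local factor; the decisive contribution comes from $p=z+1$, the only place where the 2-adic obstruction manifests. When $\nu_2(i)\neq\nu_2(j)$ and $\nu_2(n)=\nu+1$, the $(x+1)$-multiplicities $2^a+2^b=2^{\nu+1}$ in $A(x)$ and $2^{\nu+1}$ in $x^n-1$ coincide and a direct calculation in $\bF_2[z]/(z+1)^{2^{\nu+1}}$ produces $s_i+s_j=1$; other values of $\nu_2(n)$ yield $s_i+s_j=0$ either because the excess $(x+1)$-power in $q$ annihilates $s$ or because coefficient parities at $z^i,z^j$ misalign. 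When $\nu_2(i)=\nu_2(j)$, the non-power-of-$2$ exponent $2^a+2^b$ cannot be matched by $2^{\nu_2(n)}$ for any $n$, and the same local calculation uniformly gives $s_i+s_j=0$, so $Q$ is balance-refractory. The conclusion $n\equiv 2^{\nu+1}\pmod{2^{\nu+2}}$ then follows by combining the unique balance exponent $\nu_2(n)=\nu+1$ with \cite[Theorem 5.3]{cc-bal}. The main obstacle is this $(z+1)$-local computation, which must precisely track how the three $(x+1)$-adic exponents $2^a$, $2^b$, and $2^{\nu_2(n)}$ combine in $q(z)q(z^{-1})$; the power-of-$2$ dichotomy of $2^a+2^b$ is what drives both the Case~A balance and the Case~B refractoriness.
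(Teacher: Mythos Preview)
Your framework is genuinely different from the paper's and the first two steps are sound. The balance criterion you derive---that $Q$ is balanced iff $s_i+s_j=1$ where $s(z)=q(z)q(z^{-1})$---is correct: the identification $\mathrm{Rad}(B)=(q)$, the symmetry $H(z)=H(z^{-1})$ forced by $Aq=0$, and the collapse $L(gq)=(s_i+s_j)\,g(1)$ all check out. This is a clean algebraic criterion that the paper does not use; the paper instead passes to the trace function $Q'(x)=\mathrm{Tr}(x^{3}+x^{2^i+1})$, reduces via \Cref{le:redgcd} to $j=1$ with $i$ odd, and then shows $Q'$ vanishes on $\ker(F-\id)^{d_Q}\subset\bF_{2^{2^{\nu+1}}}$ by an explicit computation (\Cref{le:alpharec}) in the quadratic extension $\bF_{2^{2^{\nu+1}}}/\bF_{2^{2^{\nu}}}$.

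The gap is in your Step~3. The claim that ``the decisive contribution comes from $p=z+1$'' is not justified and is not literally true without further reduction. Concretely: take $i=6$, $j=3$ (so $\nu_2(i)\ne\nu_2(j)$, $\nu=0$) and $n$ with $\nu_2(n)=1$ and $3\mid n$. Then $p=z^2+z+1$ divides both $z^{i+j}+1$ and $z^{i-j}+1$, giving $\nu_p(A)=2=2^{\nu_2(n)}$, hence $\nu_p(q)=0$; but $\nu_p(z^{i-j}+1)=1<2$, so $(z^i+z^j)s$ has a \emph{nonzero} component at $p$. Your CRT localization therefore does not isolate $z+1$ as stated. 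One can rescue the argument by first dividing out $\gcd(i,j)$ via \Cref{le:redgcd}\,\Cref{item:1} (as the paper does), which forces $(i+j)_o$ and $(i-j)_o$ to be coprime and eliminates such primes---but you do not invoke this reduction, and even after it the actual $(z+1)$-local computation (showing $s_i+s_j=1$ exactly when $\nu_2(n)=\nu+1$ in the friendly case, and $=0$ always in the refractory case) is asserted rather than performed. That computation is where the real work lies, and your proposal stops short of it.
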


Before proceeding, note the following simple devices for reducing away some of the divisors of the $i$ involved in $(0,i)$:

\begin{lemma}\label{le:redgcd}
  Let $d$ and $n$ be two positive integers.
  \begin{enumerate}[(1)]
  \item\label{item:1} A quadratic RS function $\sum_i (0,di)$ is balanced over $dn$ if and only if $\sum_i (0,i)$ is balanced over $n$.
  \item For $k$ coprime to $n$,
    \begin{equation*}
      \sum_i (0,i)
      \quad\text{and}\quad
      \sum_i (0,ki)
    \end{equation*}
    are simultaneously (un)balanced over $n$.
  \item\label{item:2} In particular, if some $i_0$ is coprime to $n$ and
    \begin{equation*}
      i_0 i' = i (\mathrm{mod}~n),\ \forall i,
    \end{equation*}
    the functions 
    \begin{equation*}
      \sum_i(0,i)
      \quad\text{and}\quad
      (0,1)+\sum_{i\ne i_0}(0,i')
    \end{equation*}
    are simultaneously $n$-(un)balanced.  \qedhere
  \end{enumerate}
\end{lemma}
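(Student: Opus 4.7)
Part \cref{item:1} is the substantive piece; (2) and \cref{item:2} then follow by simple variable substitutions. For \cref{item:1} the plan is to exploit the $d$-periodic index structure of $(0,di)_{dn}$. Partition the variables $x_0,\ldots,x_{dn-1}$ into $d$ disjoint blocks by index residue mod $d$, relabelling $y^{(r)}_l:=x_{r+ld}$ for $r\in\{0,\ldots,d-1\}$ and $l\in\{0,\ldots,n-1\}$. In every monomial $x_jx_{j+di}$ the two indices share their residue mod $d$, so the function decouples cleanly:
\begin{equation*}
  \sum_i(0,di)_{dn}(\vec x)\;=\;\sum_{r=0}^{d-1}F(\vec y^{(r)}),\qquad F:=\sum_i(0,i)_n,
\end{equation*}
a sum of $d$ copies of $F$ acting on disjoint variable sets. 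The Walsh value at zero then factors multiplicatively,
\begin{equation*}
  W\Bigl(\sum_i(0,di)_{dn}\Bigr)(\vec 0)\;=\;\bigl(W(F)(\vec 0)\bigr)^d,
\end{equation*}
so one side vanishes iff the other does, and via \Cref{wtWlsh} this is exactly the balance equivalence sought.

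For (2), let $\sigma$ be the permutation of $\bZ/n\bZ$ sending $j\mapsto kj\pmod n$, which is a bijection precisely because $\gcd(k,n)=1$. Applying the induced variable permutation to $\sum_i(0,i)_n(\vec z)$ and re-indexing the inner sum by $j':=kj$ turns it into $\sum_i\sum_{j'}z_{j'}z_{j'+ki}=\sum_i(0,ki)_n(\vec z)$; since a variable permutation preserves the weight, the two functions are simultaneously (un)balanced. Part \cref{item:2} is then an immediate specialization of (2): take $k\equiv i_0^{-1}\pmod n$, note that the hypothesis $i_0 i'\equiv i\pmod n$ reads exactly as $i'=ki$, and peel off the index $i=i_0$ (which contributes the single term $(0,1)$) from the resulting sum.

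No step is genuinely hard; the only bookkeeping worth attending to is in \cref{item:1}, where one must verify that $\sum_i(0,di)_{dn}=\sum_r F(\vec y^{(r)})$ is a term-by-term identity of Boolean functions over $\bF_2$, so that any degenerate coincidences (for instance $2di\equiv 0\pmod{dn}$ making a summand collapse to $0$) are reflected consistently on both sides.
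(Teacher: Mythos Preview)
Your argument is correct. The paper itself states this lemma without proof (the \texttt{\textbackslash qedhere} appears inside the statement, and no proof environment follows), treating all three parts as elementary observations; your write-up supplies exactly the natural details one would fill in, namely the residue-class decomposition and Walsh factorization for \cref{item:1} and the index permutation $j\mapsto kj$ for parts (2)--(3).
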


The last remark, in particular, allows us to always reduce the problem to the case when one of the terms is $(0,1)$; this is occasionally convenient.

\begin{proposition}\label{pr:uniqv}
  Let \Cref{eq:qsum0i} be a quadratic RS function with an even number of terms.
  
  If the smallest 2-adic valuation $\nu$ over $I$ is achieved at a single $i_0\in I$, then $Q$ is balanced precisely over
  \begin{equation*}
    n=2^{\nu+1}(\mathrm{mod}~2^{\nu+2}).
  \end{equation*}
  In particular, $Q$ is balance-friendly.
\end{proposition}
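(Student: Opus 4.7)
The plan is to reduce balance to the single scalar equation $Q(e)=1$ — where $e = (x^n-1)/d$ and $d := \gcd_2(A,x^n-1)$ — and then to evaluate $Q(e)$ by testing $Q$ on one carefully chosen element of the radical $V_0$. I would begin with the standard fact that the quadratic form $Q$ on $\mathbb{F}_2^n$ is balanced iff the restriction $Q|_{V_0}$ to the radical $V_0$ of its associated bilinear form $B(p,q):=Q(p+q)+Q(p)+Q(q)$ is a nonzero $\mathbb{F}_2$-linear form. Identifying $\mathbb{F}_2^n$ with $R_n:=\mathbb{F}_2[x]/(x^n-1)$, the identity $B(p,q)=\langle q,Ap\rangle$ shows $V_0 = eR_n$. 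Because $Q$ is rotation invariant, $Q(x^je) = Q(e)$ for every $j$; combined with $\mathbb{F}_2$-linearity of $Q|_{V_0}$ this yields
\begin{equation*}
  Q(er) = Q(e)\cdot r(1)\qquad (r\in \mathbb{F}_2[x]/d),
\end{equation*}
which is well defined because $A(1)=0$ (as $|I|$ is even) forces $(x+1)\mid d$. Hence $Q$ is balanced iff $Q(e)=1$.

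Next I carry out the $(x+1)$-adic bookkeeping. Over $\mathbb{F}_2$ one computes $\nu_{x+1}(x^i+x^{-i}) = 2^{\nu_2(i)+1}$ for $i\ne 0$, so the uniqueness hypothesis on $i_0$ forces $\nu_{x+1}(A) = 2^{\nu+1}$ exactly. Together with $\nu_{x+1}(x^n-1) = 2^{\nu_2(n)}$, this gives $\nu_{x+1}(d) = 2^k$, where $k := \min(\nu+1,\nu_2(n))$. Consequently the test vector
\begin{equation*}
  \mathbf{1}_{2^k} \ :=\ \frac{x^n-1}{(x+1)^{2^k}}\ =\ \sum_{j=0}^{n/2^k-1} x^{2^k j}
\end{equation*}
lies in $V_0$, and writing $\mathbf{1}_{2^k} = e\cdot r$ with $r = d/(x+1)^{2^k}$ gives $r(1)=1$; in particular $Q(\mathbf{1}_{2^k}) = Q(e)$.

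Finally I evaluate $Q(\mathbf{1}_{2^k})$. Since $(\mathbf{1}_{2^k})_\ell = 1$ iff $2^k \mid \ell$, each summand $(0,i)(\mathbf{1}_{2^k})$ equals $n/2^k \pmod{2}$ when $2^k \mid i$ and vanishes otherwise, so
\begin{equation*}
  Q(\mathbf{1}_{2^k}) \equiv \frac{n}{2^k}\cdot |I_{\ge k}| \pmod{2},\qquad I_{\ge k} := \{i\in I : \nu_2(i)\ge k\}.
\end{equation*}
If $\nu_2(n) \le \nu$ then $k=\nu_2(n)$ and $I_{\ge k} = I$ has even cardinality, giving $Q(e) = 0$. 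If $\nu_2(n) \ge \nu+1$ then $k = \nu+1$ and $I_{\ge k} = I\setminus\{i_0\}$ has odd cardinality (as $|I|$ is even), whence $Q(e) \equiv n/2^{\nu+1} \pmod 2$, which equals $1$ iff $\nu_2(n) = \nu+1$. Combining the regimes, $Q$ is balanced iff $\nu_2(n) = \nu+1$, as required. The main obstacle is the first step: leveraging rotation symmetry to collapse the linearity question on the potentially high-dimensional $V_0$ into the single scalar test $Q(e) = 1$; once that reduction is in place, the $(x+1)$-adic bookkeeping and the evaluation on $\mathbf{1}_{2^k}$ are routine.
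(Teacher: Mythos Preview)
Your argument is correct and takes a genuinely different route from the paper's. The paper reduces the problem, via several results quoted from \cite{cc-bal} (Theorem~5.9, Corollary~5.27, Theorem~5.32, Proposition~5.21), to checking that the associated trace function $Q'(x)=\mathrm{Tr}\bigl(\sum_{i\in I}x^{2^i+1}\bigr)$ is not identically zero on $\bF_4$ after dividing everything by $2^{\nu}$; that final check is a short explicit computation in $\bF_4$. Your proof, by contrast, never leaves the Boolean model $\bF_2[x]/(x^n-1)$: you use the standard radical criterion for balance of a quadratic form, observe that rotation invariance collapses $Q|_{V_0}$ to the single value $Q(e)$, and then evaluate $Q$ on the explicit vector $\mathbf{1}_{2^k}$. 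This is more elementary and fully self-contained, requiring none of the trace-function machinery or the cited reductions; it also handles all $n$ uniformly in one pass rather than first isolating the critical power of~$2$. The paper's approach, on the other hand, plugs directly into the framework already developed in \cite{cc-bal}, which is the natural choice given that those reductions are reused elsewhere (for instance in the proof of \Cref{th:whenbinombal}). One small remark: your parenthetical ``as $|I|$ is even'' when justifying $(x+1)\mid d$ is superfluous, since $A(1)=\sum_{i\in I}(1+1)=0$ in $\bF_2$ regardless of $|I|$; the even-cardinality hypothesis enters only at the final counting step.
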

\begin{proof}
  By \cite[Theorem 5.9]{cc-bal} it is enough to study balancing over powers of 2, while \cite[proof of Theorem 5.32]{cc-bal} shows that the {\it only} power of 2 over which $Q$ could be balanced is $2^{d_Q}$, the smallest one dominating valuation
  \begin{equation*}
    d_Q:=\nu_{(x-1)} \sum_{i\in I}\left(x^i-x^{-i}\right)\text{ in }\bF_2[x^{\pm 1}]. 
  \end{equation*}  
  Under our present assumption that
  \begin{equation*}
    \nu:=\nu_2(i_0)\text{ is the unique minimum among }\nu_2(i),\ i\in I,
  \end{equation*}
  \cite[Corollary 5.27]{cc-bal} shows that $d_Q$ is in fact the $(x-1)$-valuation of the single summand
  \begin{equation*}
    x^{i_0}-x^{-i_0}\sim x^{2i_0}-1,
  \end{equation*}
  which in turn is easily seen to be precisely $\nu+1$. In short, the question of whether or not $Q$ is ever balanced (over any $n$) reduces to $n=2^{\nu+1}$. Now, because all $i\in I$, along with $n=2^{\nu+1}$, are all divisible by $2^\nu$, \Cref{le:redgcd} \Cref{item:1} further boils down the question to $n=2$ with $i_0$ the unique odd $i\in I$.
  
  The goal, by \cite[Theorem 2.1 and Proposition 5.21]{cc-bal}, is to show that the trace function
  \begin{equation}\label{eq:q'}
    Q'(x):=\mathrm{Tr}_{\bF_4/\bF_2}\left(\sum_{i\in I} x^{2^i+1}\right),\ x\in \bF_{2^2}
  \end{equation}
  does {\it not} vanish identically on the field $\bF_4$ with 4 elements. To that end, note first that all $i\in I$ {\it different} from $i_0$ are assumed divisible by 2, whence all $2^i$, $i\in i_0$ appearing among the exponents of \Cref{eq:q'} are powers of 4, hence
  \begin{equation*}
    x^{2^i} = x\Longrightarrow x^{2^i+1} = x^2,\ \forall i\ne i_0\in I\text{ and }x\in \bF_4. 
  \end{equation*}
  Since there is an {\it odd} number of terms indexed by $i\ne i_0$, \Cref{eq:q'} equals
  \begin{equation}\label{eq:tr2terms}
    \mathrm{Tr}\left(x^{2^{i_0}+1} + x^2\right). 
  \end{equation}
  because $i_0$ is odd however,
  \begin{equation*}
    x^{2^{i_0}} = x^2\Longrightarrow x^{2^{i_0}+1} = x^3,\ \forall x\in \bF_4. 
  \end{equation*}
  All cubes in $\bF_4$ are elements of $\bF_2\subset \bF_4$, so their trace (when regarded as elements of $\bF_4$) vanishes. For that reason, the first term in \Cref{eq:tr2terms} simply drops and $Q'(x)$ equals $\mathrm{Tr}(x^2)$. The trace itself does not vanish and $x\mapsto x^2$ is an automorphism, so we are done. 
\end{proof}

In particular, specializing to only {\it two} terms:

\begin{corollary}\label{cor:binom-uniqv}
  Suppose the two positive integers $j<i$ have different 2-adic valuations. The RS function $(0,j)+(0,i)$ is balance-friendly, and balanced precisely over
  \begin{equation*}
    n=2^{\nu+1}(\mathrm{mod}~2^{\nu+2}),
  \end{equation*}
  where $\nu:=\min\left(\nu_2(j),\ \nu_2(i)\right)$.  \qedhere
\end{corollary}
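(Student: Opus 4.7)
The plan is to recognize that \Cref{cor:binom-uniqv} is the two-term specialization of \Cref{pr:uniqv}, so essentially no work remains. I would set $I=\{j,i\}$ in the notation of \Cref{eq:qsum0i}, so that our function $(0,j)+(0,i)$ is exactly the $Q$ of that proposition.

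The hypothesis of \Cref{pr:uniqv} has two parts: (a) $Q$ has an even number of terms, and (b) the smallest $2$-adic valuation over $I$ is achieved at a single index $i_0\in I$. Part (a) is automatic since $|I|=2$. For part (b), the standing assumption that $\nu_2(j)\neq \nu_2(i)$ says precisely that one of $\nu_2(j), \nu_2(i)$ is strictly smaller than the other, which is exactly the statement that the minimum is achieved uniquely. Hence both hypotheses hold and \Cref{pr:uniqv} applies.

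The conclusion of \Cref{pr:uniqv} then states that $Q$ is balance-friendly and balanced precisely over
\begin{equation*}
  n=2^{\nu+1}\ (\mathrm{mod}~2^{\nu+2}),
\end{equation*}
with $\nu$ the common minimum $\nu_2(i_0)$. Since in our case this minimum is $\min(\nu_2(j),\nu_2(i))$, the corollary's statement drops out verbatim.

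There is no real obstacle: the entire proof is a matter of checking that the hypotheses match and pointing to \Cref{pr:uniqv}. The only thing worth explicitly noting in writing is that the ``uniqueness of the minimum $2$-adic valuation'' condition translates, in the two-term case, exactly to ``distinct $2$-adic valuations of $i$ and $j$,'' so that no case analysis (e.g.\ which of $i, j$ realizes the minimum) is needed.
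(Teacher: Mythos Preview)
Your proposal is correct and is exactly the paper's approach: the corollary is stated immediately after \Cref{pr:uniqv} with nothing more than the phrase ``specializing to only two terms,'' and your verification that $|I|=2$ is even and that $\nu_2(i)\ne\nu_2(j)$ gives uniqueness of the minimum is precisely the implicit check the paper leaves to the reader.
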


\pf{th:whenbinombal}
\begin{th:whenbinombal}
  \Cref{cor:binom-uniqv} disposes of half of the claim, leaving only the case $\nu_2(i)=\nu_2(j)$ to contend with (the goal being to prove that $Q$ is never balanced). The problem undergoes a number of simplifications:
\begin{itemize}
 
\item By \Cref{cor:binom-uniqv}, we can assume that $\nu_2(i)$ and $\nu_2(j)$ are equal:
\begin{equation*}
  i=2^{\mu}i',\ j=2^{\mu}j'
  \quad\text{with}\quad
  i',\ j'\text{ odd}. 
\end{equation*}

\item It follows, then, that the $(x-1)$-adic valuation of
  \begin{equation*}
    A(x)=A_Q(x)
    :=
    x^{i}+x^{-i}+x^j+x^{-j}\sim (x^{i+j}+1)(x^{i-j}+1)
  \end{equation*}
  is
  \begin{equation}\label{eq:dq}
    d=d_Q=2^{\nu_2(i+j)} + 2^{\nu_2(i-j)}.
  \end{equation}

\item Since these two latter valuations are different (exactly one of $i+j$ and $i-j$ is divisible by $2^{\mu+2}$), the smallest power of 2 dominating $d_Q$ is
  \begin{equation}\label{eq:nq}
    n=n_Q:=2^{\max\left(\nu_2(i+j),\nu_2(i-j)\right) + 1}.
  \end{equation}

\item The proof of \cite[Theorem 5.32, case 1]{cc-bal} shows that whether or not $Q$ is ever balanced (over any $n$ at all) boils down to the same question over $n_Q$.

\item Since $n_Q$ is divisible by $2^{\mu}$ just as $i$ and $j$ are, \Cref{le:redgcd} \Cref{item:1} allows us to divide through by that common factor and assume $i$ and $j$ are odd.

\item And furthermore, $i$ and $j$ being odd now and hence coprime to \Cref{eq:nq}, \Cref{le:redgcd} \Cref{item:2} further allows us to assume, when convenient, that $j<i$ is in fact $1$.
\end{itemize}

Consider, then, a RS function $(0,1)+(0,i)$, and its associated trace function
\begin{equation}\label{eq:q'i1}
  Q':=\mathrm{Tr}\left(x^3+x^{2^i+1}\right).
\end{equation}
$Q$ is balanced over $n$ if and only if $Q'$ is balanced on $\bF_{2^n}$ \cite[Theorem 2.1]{cc-bal}, and this in turn depends only on the behavior at the ``critical'' value $n=n_Q=2^{\nu+1}$ given by \Cref{eq:nq}, with
\begin{equation*}
  j=1,\ \nu:=\max(\nu_2(i-1),\nu_2(i+1)). 
\end{equation*}

$Q'$ is {\it un}balanced on $\bF_n$ if and only if it vanishes identically on the $\bF_2$-subspace
\begin{equation}\label{eq:kerf1}
  \ker\left(F-\id\right)^{d_Q}\subset \bF_{2^{2^{\nu+1}}}
\end{equation}
\cite[discussion preceding Proposition 5.18]{cc-bal}, where
\begin{itemize}
\item $F$ is the Frobenius automorphism $x\mapsto x^2$ on the algebraic closure $\overline{\bF_2}$;
\item and $d_Q$ is the $(x-1)$-adic valuation of $A(x)$, as in \Cref{eq:dq} with $j=1$:
  \begin{equation}\label{eq:dspec1i}
    d_Q=2^{\nu_2(i+1)} + 2^{\nu_2(i-1)} = 2 + 2^{\nu}. 
  \end{equation}
\end{itemize}
Now \Cref{eq:dspec1i} and
\begin{equation*}
  \ker\left(F-\id\right)^{2^{\nu}} = \ker\left(F^{2^{\nu}}-\id\right) = \bF_{2^{2^{\nu}}}\subset \bF_{2^{2^{\nu+1}}}
\end{equation*}
jointly imply that \Cref{eq:kerf1} equals
\begin{equation*}
  \{x\in \bF_{2^{2^{\nu+1}}}\ |\ (F^2-\id)x = x^4-x\in \bF_{2^{2^{\nu}}}\}.
\end{equation*}
An element $x\in \bF_{2^{2^{\nu+1}}}$ such that
\begin{equation*}
  x^4-x+\beta=0\text
  \quad\text{for some }\beta\in \bF_{2^{2^{\nu}}}
\end{equation*}
has degree $\le 2$ over $\bF_{2^{2^\nu}}$ (because $[\bF_{2^{2^{\nu+1}}}:\bF_{2^{2^\nu}}]=2$), so there is a factorization
\begin{equation*}
  x^4-x+\beta = \left(x^2-x+\alpha\right)\left(x^2-x+\alpha+1\right)
  \quad\text{for some}\quad
  \alpha\in \bF_{2^{2^{\nu}}}. 
\end{equation*}
We will henceforth focus on roots $x$ of a single factor, say $x^2-x+\alpha$. There are two cases to consider:
\begin{enumerate}[(a)]
\item {\bf The trace $\mathrm{Tr}_{\bF_{2^{2^{\nu}}}/\bF_2}(\alpha)$ vanishes.} This is just another way of saying that the roots of $x^2-x+\alpha$ already belong to $\bF_{2^{2^{\nu}}}$. But then so do $x^3$ and $x^{2^i+1}$, so their $\bF_2$-valued traces on the order-two extension $\bF_{2^{2^{\nu+1}}}$ of $\bF_{2^{2^{\nu}}}$ vanish.

  In short, in this case \Cref{eq:q'i1} vanishes because both of its terms do.
  
\item {\bf The trace $\mathrm{Tr}_{\bF_{2^{2^{\nu}}}/\bF_2}(\alpha)$ does not vanish.} Or: $x^2-x+\alpha$ is irreducible over $\bF_{2^{2^{\nu}}}$, and hence any of its roots generates the extension $\bF_{2^{2^{\nu+1}}}/\bF_{2^{2^{\nu}}}$.

  We relegate to \Cref{le:alpharec} the fact that for any positive integer $k$ we have
  \begin{equation}\label{eq:alphasum}
    \mathrm{Tr}_{\bF_{2^{2^{\nu+1}}}/\bF_{2^{2^{\nu}}}}x^{2^k+1}
    =
    1+\alpha+\alpha^2+\alpha^4+\cdots+\alpha^{2^{k-1}}.
  \end{equation}
  For odd $k$ (such as $k=1$ or $k=i$) the number of $\alpha^{2^u}$ terms in the above sum is odd, and all of those terms have the same $\left(\bF_{2^{2^{\nu}}}/\bF_2\right)$-trace (namely 1). On the other hand $\mathrm{Tr}_{\bF_{2^{2^{\nu}}}/\bF_2}(1)=0$, so that \Cref{eq:alphasum} equals 1 for both $k=1$ and $k=i$.

  Once more, then, \Cref{eq:q'i1} vanishes (this time because both terms are equal to 1).
\end{enumerate}
This finishes the proof of the theorem. 
\end{th:whenbinombal}

\begin{lemma}\label{le:alpharec}
  Let $u$ and $v$ be the two roots of the polynomial $x^2-x+\alpha$ over a field $\bF$ of characteristic 2.
  
  For any $k\in \bZ_{\ge 0}$ we have
  \begin{equation*}
    u^{2^k+1}+v^{2^k+1} = 1+\alpha+\alpha^2+\alpha^4+\cdots+\alpha^{2^{k-1}}.
  \end{equation*}
\end{lemma}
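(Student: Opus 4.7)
The plan is to give a closed form for $u^{2^k}$ using the Frobenius, multiply by $u$, and sum. The characteristic-2 arithmetic makes this essentially a direct computation.

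First I would record Vieta's relations. Since $x^2 - x + \alpha$ has roots $u, v$, we have $u + v = 1$ and $uv = \alpha$ in $\bF$. In particular, the defining relation rearranges to $u^2 = u + \alpha$ (and likewise $v^2 = v + \alpha$), since $-\alpha = \alpha$ in characteristic 2.

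Next I would prove, by induction on $k \ge 0$, the closed form
\begin{equation*}
  u^{2^k} = u + \alpha + \alpha^2 + \alpha^4 + \cdots + \alpha^{2^{k-1}},
\end{equation*}
with the convention that the sum $\alpha + \alpha^2 + \cdots + \alpha^{2^{k-1}}$ is empty (hence $0$) when $k = 0$. The base case is trivial. For the induction step, square both sides and use the Frobenius identity $(a+b)^2 = a^2 + b^2$ in characteristic 2 together with $u^2 = u + \alpha$ to obtain
\begin{equation*}
  u^{2^{k+1}} = (u^{2^k})^2 = u^2 + \alpha^2 + \alpha^4 + \cdots + \alpha^{2^k} = u + \alpha + \alpha^2 + \cdots + \alpha^{2^k}.
\end{equation*}
The identical identity holds for $v$. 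Denote the common sum by $S_k := \alpha + \alpha^2 + \cdots + \alpha^{2^{k-1}}$, so $u^{2^k} = u + S_k$ and $v^{2^k} = v + S_k$.

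Finally, multiply through by $u$ and $v$ respectively and add:
\begin{equation*}
  u^{2^k + 1} + v^{2^k + 1}
  = u(u + S_k) + v(v + S_k)
  = (u^2 + v^2) + (u+v) S_k.
\end{equation*}
Using $u^2 + v^2 = (u+v) + 2\alpha = 1$ and $u + v = 1$ in characteristic 2, this simplifies to $1 + S_k$, which is the desired right-hand side. The only step requiring any real care is the induction, and even that is immediate once one invokes the Frobenius; there is no genuine obstacle.
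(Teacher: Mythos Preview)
Your argument is correct. The closed form $u^{2^k}=u+S_k$ via iterated Frobenius is clean, and the final step using $u^2+v^2=(u+\alpha)+(v+\alpha)=1$ and $u+v=1$ is fine.

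The paper takes a different route: it sets up the power-sum sequence $S_s:=u^s+v^s$, notes the linear recurrence $S_{s+2}=S_{s+1}+\alpha S_s$ with $S_0=0$, $S_1=1$, rewrites this via the companion matrix $\begin{pmatrix}1&\alpha\\1&0\end{pmatrix}$, and then computes the $(2^k)^{\text{th}}$ power of that matrix by repeated squaring to read off $S_{2^k+1}$ from the top-left entry. Your approach bypasses the matrix machinery entirely by working directly with the roots and the Frobenius, which is arguably more transparent here since the exponents $2^k$ are exactly tailored to it. The paper's approach has the minor advantage of being agnostic about whether $u,v$ lie in $\bF$ or a quadratic extension (it works purely with $\alpha$), but your version handles that implicitly anyway since the computation is formal. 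Both are short; yours is the more elementary of the two.
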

\begin{proof}
  The sequence $S_s:=u^s+v_s$ satisfies
  \begin{equation*}
    S_0=0,\ S_1=1\ S_{n+2} = =S_{n+1}+\alpha S_n,\ \forall n\ge 0.
  \end{equation*}
  The recursion can be written in matrix form as
  \begin{equation*}
    \begin{pmatrix}
      S_{n+2}\\
      S_{n+1}
    \end{pmatrix}
    =
    \begin{pmatrix}
      1&\alpha\\
      1&0
    \end{pmatrix}
    \begin{pmatrix}
      S_{n+1}\\
      S_{n}
    \end{pmatrix},
  \end{equation*}
  whence
  \begin{equation*}
    \begin{pmatrix}
      S_{n+1}\\
      S_n
    \end{pmatrix}
    =
    \begin{pmatrix}
      1&\alpha\\
      1&0
    \end{pmatrix}^n
    \begin{pmatrix}
      S_1\\
      S_0
    \end{pmatrix}
    =
    \text{first column of }
    \begin{pmatrix}
      1&\alpha\\
      1&0
    \end{pmatrix}^n.
  \end{equation*}
  Now it remains to observe that
  \begin{equation*}
    \begin{pmatrix}
      1&\alpha\\
      1&0
    \end{pmatrix}^{2^k}
    =
    \begin{pmatrix}
      1+\alpha_k & \alpha\\
      1&\alpha_k
    \end{pmatrix}
    \quad\text{with}\quad
    \alpha_k:=\alpha+\alpha^2+\alpha^4+\cdots+\alpha^{2^{k-1}},
  \end{equation*}
  admitting a simple inductive proof by repeated squaring. 
\end{proof}

A word of caution (retaining the notation in the proof of \Cref{th:whenbinombal}): although for powers $2^{\mu}$, $\mu\le \nu$, $Q'$ vanishes identically on $\bF_{2^{2^{\mu}}}$ \cite[Proposition 5.21]{cc-bal}, $Q'$ need not vanish identically on the entire critical field $\bF_{2^{2^{\nu+1}}}$.

\begin{example}\label{ex:13}
  Consider the RS function $Q:=(0,1)+(0,3)$. The critical value of $n$ for checking balancing is \Cref{eq:nq}, i.e.
  \begin{equation*}
    2^{\max(\nu_2(4),\nu_2(2))+1} = 2^3=8,
  \end{equation*}
  so whether or not $Q$ is balanced amounts to whether or not
  \begin{equation}\label{eq:q'13}
    Q'=\mathrm{Tr}\left(x^3+x^9\right)
  \end{equation}
  is balanced on $\bF_{2^8}$; that this is not so follows from \Cref{th:whenbinombal}. On the other hand, because the $d_Q$ of \Cref{eq:dq} is in this case $4+2=6$, \Cref{eq:q'13} vanishes identically on
  \begin{equation*}
    \ker\left(F-\id\right)^6\subset \bF_{2^8}. 
  \end{equation*}
  On the other hand, $Q'$ does not vanish on all of $\bF_{2^8}$.

  To check the latter claim, notice first that the non-zero cubes $x^3$, $x\in \bF_{2^8}$ are precisely the roots of unity whose order divides
  \begin{equation*}
    \frac{2^8-1}{3} = (2^2+1)(2^4+1) = 5\cdot 17.
  \end{equation*}
  For that reason, whether or not $Q'\equiv 0$ amounts to whether every $(5\cdot 17)^{th}$ root of unity $y$ in $\bF_{2^8}$ (such as $y=x^3$) has the same trace as its cube (i.e. $x^9=(x^3)^3$).

  Now, take for $y$ a primitive $17^{th}$ root of unity. Because the multiplicative order of 2 modulo 17 is 8, the cyclotomic polynomial
  \begin{equation*}
    \Phi_{17}(x) = x^{16}+\cdots + x + 1
  \end{equation*}
  factors over $\bF_2$ as a product $\Phi_{17}=f\cdot g$ of two polynomials of order 8 \cite[Theorem 2.47]{MR1429394}. Because the first sub-dominant coefficient in $\Phi_{17}$ (i.e. the coefficient of $x^{16-1}=x^{15}$) is non-zero, the sub-dominant coefficients of $f$ and $g$ differ. In other words: the traces (over $\bF_2$) of the roots of $f$ differ from those of $g$.

  Now simply note that $3$ is not a quadratic residue modulo $17$ (either by a direct check or quadratic reciprocity \cite[\S 5.2, Theorem 1]{ir-nt}), so must generate the order-16 cyclic multiplicative group $(\bZ/17)^{\times}$. It follows that cubing cycles through all $17^{th}$ roots of unity in $\bF_{2^8}$, so some for some root $y$ of $f$ its cube $y^3$ is a root of $g$. Consequently, there are cubes $x^3=y$ whose own cubes $x^9=y^3$ have different trace, and hence 
  \begin{itemize}
  \item $\mathrm{Tr}_{\bF_8/\bF_2}\left(x^3+x^9\right)$ vanishes on $\ker\left(F-\id\right)^6\subset \bF_8$;
  \item but not on $\ker\left(F-\id\right)^8  = \bF_8$.
  \end{itemize}
\end{example}

\subsection{Unbalanced functions}\label{subse:unbal}

Consider a quadratic RS function
\begin{equation}\label{eq:qgen}
  Q = \sum_{i\in I}(0,i). 
\end{equation} 
We know that its weight at $n$ is of the form
\begin{equation} \label{eq:pmW}
  wt(Q_n) = 2^{n-1}\pm 2^{\frac{n+v(n)}2-1}\text{ or }2^{n-1},
\end{equation}
where $v(n)=v_Q(n)$ is the plateau parameter associated to $Q$ (see 
\Cref{quadwt} and the discussion of plateaued functions in \Cref{se:int}).

When the function is {\it not} balanced, then, knowing its weight amounts to knowing which of the two signs obtains; the following terms are meant to convey this:

\begin{definition}\label{def:overunder}
  A Boolean function in $n$ variables is
  \begin{itemize}
  \item {\it overbalanced} if its weight is strictly larger than $2^{n-1}$;
  \item and {\it underbalanced} if the weight is strictly smaller than $2^{n-1}$.
  \end{itemize}
\end{definition}

It is known \cite[p. 172]{carl} that determining the $\pm$ sign attached to the Walsh transform value in \Cref{eq:pmW} is often much more difficult than determining the Walsh value itself. The results below in this section shed a little light on this issue.

In studying balancing, the trace function
\begin{equation}\label{eq:q'gen}
  Q'(x):=\sum_{i\in I}\mathrm{Tr}~x^{2^i+1}
\end{equation}
associated to $Q$ is of great help, as seen above: $Q$ is balanced at $n$ if and only if $Q'$ is balanced over $\bF_{2^n}$. Note, however, that over or underbalancing are not transferable in the same fashion:

\begin{lemma}\label{le:qq'overunder}
  For any quadratic RS function \Cref{eq:qgen} with associated trace function \Cref{eq:q'gen} there is an $n$ at which $Q$ is underbalanced while $Q'$ is overbalanced.
\end{lemma}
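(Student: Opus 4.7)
The first reduction is that $W := W(Q_n)(\vec 0)$ and $S_n := \sum_{x \in \bF_{2^n}}(-1)^{Q'(x)}$ share a common absolute value $2^{(n+v(n))/2}$, since $Q_n$ on $\bF_2^n$ and $Q'|_{\bF_{2^n}}$ are both $v(n)$-plateaued quadratic forms. This recovers the fact already recorded that balance transfers between $Q_n$ and $Q'|_{\bF_{2^n}}$ (both sums vanish simultaneously), and it turns the question of ``over vs.\ underbalance'' into a question about the signs of $W$ and $S_n$: $Q_n$ is underbalanced iff $W > 0$, while $Q'|_{\bF_{2^n}}$ is overbalanced iff $S_n < 0$. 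The lemma therefore asserts that for an appropriate $n$ (depending on $Q$) these two signs disagree.

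The plan is to exhibit such an $n$ explicitly, guided by a prototype calculation. For $Q = (0,1)$ at $n = 4$, direct enumeration on $\bF_2^4$ gives $wt(Q_4) = 4$ and hence $W = +8$, while a Frobenius-orbit decomposition of $\bF_{16}$ (in the style of \Cref{ex:13}) gives $wt(Q'|_{\bF_{16}}) = 12$ and hence $S_4 = -8$. This settles the lemma for $Q=(0,1)$. For general $Q = \sum_{k \in I}(0,k)$ I would use the reductions of \Cref{le:redgcd} to divide out by $\gcd(I)$ and multiply by a unit, so that $(0,1)$ appears as a summand of $Q$; then I would pick $n$ inside the period $V(Q)$ at which (a) $Q_n$ is unbalanced and (b) the trace-form contributions of the remaining summands $(0,i_k)$ are controllable via subfield/Frobenius-orbit arguments of the type used in the proofs of \Cref{pr:uniqv} and \Cref{th:whenbinombal}. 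The objective is that the $(0,1)$ contribution governs the sign disagreement while the other summands either cancel (by orbit-size-mod-$2$ arguments) or contribute in a direction that preserves the sign flip.

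The main obstacle is uniformity across all $Q$: each extra summand $(0,i_k)$ with $i_k>1$ alters both $W$ and $S_n$, and it is not immediate that their combined effect preserves the sign opposition observed for the $(0,1)$ prototype. The technical toolkit for managing this is precisely the Frobenius-orbit-and-$2$-adic-valuation machinery already built in \Cref{th:whenbinombal}: orbit sizes, vanishing of traces of subfield elements, and Arf-type sign tracking should together pin down the sign of $S_n$ in terms of coarse invariants of $I$ (such as $|I| \bmod 2$ and the valuations $\nu_2(i_k)$), and similarly for $W$ via the rules-matrix recurrence. With such a coarse classification the desired sign disagreement $W>0>S_n$ appears, for each $Q$, at some $n$ in the period $V(Q)$, completing the proof.
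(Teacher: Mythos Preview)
Your proposal is not a proof but a plan, and the plan has a real gap. Beyond the single computation for $Q=(0,1)$ at $n=4$, everything is in the subjunctive: ``I would use \Cref{le:redgcd}\ldots'', ``I would pick $n$\ldots'', the remaining summands ``should'' be controllable, and a coarse classification ``should'' force $W>0>S_n$. You explicitly flag the obstacle yourself: once extra summands $(0,i_k)$ are present, both $W$ and $S_n$ shift, and nothing in your sketch guarantees the sign opposition survives. Orbit-size-mod-$2$ arguments and Arf tracking will not obviously produce a uniform $n$ that works for every finite set $I$, and your reduction via \Cref{le:redgcd} only normalizes one summand to $(0,1)$; it does not tame the rest.

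The paper bypasses all of this by appealing to the weight formulas from \cite[Theorem 5.2]{cc-rec}: there exist equinumerous multisets $\{\alpha_i\}$, $\{\gamma_j\}$ of $\sqrt{2}$-scaled roots of unity such that
\[
  wt(Q_n)=2^{n-1}-\tfrac12\sum_i\alpha_i^n,
  \qquad
  wt(Q'|_{\bF_{2^n}})=2^{n-1}+\tfrac12\sum_j\gamma_j^n.
\]
The opposite signs are built into the formulas, so the only issue is positivity of the power sums. Taking $n$ even and divisible by the order of every $\alpha_i/\sqrt 2$ and $\gamma_j/\sqrt 2$ makes each $\alpha_i^n=\gamma_j^n=2^{n/2}>0$, whence $wt(Q_n)<2^{n-1}<wt(Q'|_{\bF_{2^n}})$. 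This is a one-line uniform construction of $n$, requiring no case analysis on $I$, no Frobenius-orbit bookkeeping, and no sign tracking. Your prototype $n=4$ for $(0,1)$ is in fact an instance of this mechanism (the relevant eigenvalues are $\sqrt{2}$ times $4^{\text{th}}$ roots of unity), but you did not recognize the structural reason behind it.
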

\begin{proof}
  By \cite[Theorem 5.2]{cc-rec} (and its proof), the weights of $Q$ and $Q'$ at $n$ are
  \begin{equation}\label{eq:alphagamma}
    2^{n-1}-\frac{\sum\alpha_i^n}2
    \quad\text{and}\quad
    2^{n-1}+\frac{\sum\gamma_j^n}2
  \end{equation}
  respectively, where $\{\alpha_i\}$ and $\{\gamma_j\}$ are equinumerous multisets of roots of unity scaled by $\sqrt 2$. If $n$ is divisible by the orders of all $\frac{\alpha_i}{\sqrt 2}$ and $\frac{\gamma_j}{\sqrt 2}$ then $Q$ will be underbalanced, while $Q'$ will be overbalanced. 
\end{proof}

It is also easy enough to see, in concrete examples, how it comes about that Boolean and trace functions diverge in their over/underbalancing behavior. 

\begin{example}\label{ex:monbooltr1}
  Consider the monomial RS function $Q=(0,1)$. According to \cite[Theorem 8]{kph} $Q$ is balanced precisely over odd $n$, and underbalanced elsewhere; in particular, it will be underbalanced at $n=4$.
  
  On the other hand, the corresponding trace function $Q':=\mathrm{Tr}~x^3$ is easily seen to be {\it over}balanced at $n=4$: all non-zero elements of $\bF_{2^4}$ are $15^{th}$ roots of unity, so the cubes cubes $x^3$ are 0 and the $5^{th}$ roots of unity. Because the multiplicative order of 2 modulo 5 is 4, the cyclotomic polynomial
  \begin{equation*}
    \Phi_5(x) = x^4+x^3+x^2+x+1
  \end{equation*}
  stays irreducible over $\bF_2$ \cite[Theorem 2.47]{MR1429394}, and hence each of its roots generates the extension $\bF_{2^4}/\bF_2$ and they all have trace 1. The cubes $x^3$ with vanishing trace, then, are $0$ and $1$; for that reason,
  \begin{equation*}
    \ker\left(x\mapsto \mathrm{Tr}~x^3\right) = \{0\}\cup\{x\in \bF_{2^4}\ |\ x^3=1\}. 
  \end{equation*}
  This set has cardinality 4, so the weight of $Q'$ over $\bF_{2^4}$ is $2^4-4 = 12 > 2^3$.
\end{example}

\begin{remark}
  \Cref{ex:monbooltr1} also verifies the fact that whenever $Q$ and $Q'$ have different weights (at $n$, say), those weights must be ``complementary'' in the sense that they add up to $2^{n-1}$: in this specific case, for $n=4$ we have 
  \begin{equation}\label{eq:wtqn4}
    wt(Q) = 2^{n-1} - 2^{\frac{n+v}2-1} = 2^3 - 2^{\frac{4+v}2-1}
  \end{equation}
  for the plateau parameter $v$ of $Q$ at $4$ (a consequence of \cite[Theorem 8]{kph} again, and the fact that quadratic Boolean functions are plateaued). That plateau parameter equals
  \begin{equation*}
    \deg\mathrm{gcd}\left(x^4-1,x^2-1\right) = 2
  \end{equation*}
  \cite[Lemma 3.13]{cc-bal}, so that \Cref{eq:wtqn4} equals $2^3-2^2$; compare this to the weight $2^3+2^2$ of $Q'$ in \Cref{ex:monbooltr1}. 
\end{remark}

Not only are $Q$ and $Q'$ not, in general, simultaneously over(under)balanced, but the over/under pattern can differ qualitatively. This is already visible with $Q=(0,1)$ and $Q'=\mathrm{Tr}~x^3$: as observed, $Q$ is either balanced or underbalanced, whereas $Q'$ can be both overbalanced (\Cref{le:qq'overunder}) and {\it under}balanced: $\mathrm{Tr}~x^3$ vanishes, for instance, on $\bF_{2^2}=\bF_4$. Underbalancing also occurs at higher, ``unproblematic'' $n\ge 3=2\cdot 1+1$:

\begin{example}\label{ex:q'undertoo}
  With the same $Q$ and $Q'$ as in \Cref{ex:monbooltr1}, consider the case $n=6$ (so the relevant field to take traces over is $\bF_{2^6}$).

  The non-zero cubes $x^3$, $x\in \bF_{2^6}$ are roots of unity of orders dividing $\frac{2^6-1}{3}=21=3\cdot 7$, giving the following trace tally:
  \begin{itemize}
  \item 0 naturally has trace 0, giving one 0-trace element.
  \item 1 also has trace 0, giving three more $x$ with $x^3=1\Rightarrow \mathrm{Tr}~x^3=0$.
  \item the two primitive order-3 roots have trace 1 in their splitting field $\bF_{2^2}$, over which $\bF_{2^6}$ has odd degree. This gives $6=2\cdot 3$ elements
    \begin{equation*}
      x^3=\text{primitive order-3 root}\Longrightarrow \mathrm{Tr}~x^3=1.
    \end{equation*}
  \item the six primitive order-7 roots belong to the smaller field $\bF_{2^3}$ over which $\bF_{2^6}$ has {\it even} degree; their traces thus vanish, and we have $18=6\cdot 3$ elements
    \begin{equation*}
      x^3=\text{primitive order-7 root}\Longrightarrow \mathrm{Tr}~x^3=0.
    \end{equation*}
  \item As for the primitive roots of order 21, they are roots of the cyclotomic polynomial $\Phi_{21}$ of degree 12, which factors over $\bF_2$ as a product of two polynomials $f$ and $g$ of degree 6 \cite[Theorem 2.47]{MR1429394}. Because the first sub-dominant coefficient of $\Phi_{12}$ is 1, those of $f$ and $g$ must be 0 and 1, meaning that the roots of $f$ and $g$ have different traces. This provides as many elements of trace 0 as of trace 1, so the two cancel.    
  \end{itemize}

  On balance, then, there are more $x$ with $\mathrm{Tr}~x^3=0$ that with trace 1. Specifically, per the above discussion,
  \begin{equation*}
    \sharp\left\{x\in \bF_{2^6}\ |\ \mathrm{Tr}~x^3=0\right\}
    -
    \sharp\left\{x\in \bF_{2^6}\ |\ \mathrm{Tr}~x^3=1\right\}
    =
    18-6+4 = 16.
  \end{equation*}
  This shows that in this case the weights of $Q$ and $Q'$ do coincide, and are equal to $2^5-2^3$.
\end{example}

\begin{remark}\label{re:alphanegamma}
  It is plain from \Cref{ex:monbooltr1,ex:q'undertoo}, for instance, that the $\alpha$s and $\gamma$s of \Cref{eq:alphagamma} cannot coincide as multisets, despite some initial plausibility (they are multisets of equal sizes, consisting of roots of unity scaled by the same amount).
\end{remark}

\section{Sums of more than two functions} \label{se:grt2}

We will provide a method for computing the period of the list of $v$-values for the sum of any number of quadratic monomials $(0,i)_n,$ but it does not seem possible to obtain a concise formula for these periods like the one in \Cref{th:gij}.

Define $Q_I=Q_{I,n} = \sum_{i \in I} (0, i)_n,$ where $I = \{a(i):~0 < i \leq u\}$. It is convenient to use $V(I)$ instead of $V(Q_I)$ for the period length of the $v$-values $v(n) = v(n,I).$
\begin{theorem} \label{th:QI}
The list of $v$-values for $Q_{I,n}$ has period $V(I)$ equal to the smallest positive integer $k$ such that $x^k-1$ is divisible mod $2$ by
$$A_I(x) = x^{2a(1)} + \sum_{j=2}^{u} (x^{a(1)+a(j)} + x^{a(1)-a(j)}) + 1.$$
\end{theorem}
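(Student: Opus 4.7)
The plan is to reduce \Cref{th:QI} to \Cref{th:prd} by identifying $A_I(x)$ (up to units) with the Laurent polynomial $A(x) := \sum_{j=1}^{u}\bigl(x^{a(j)}+x^{-a(j)}\bigr)$ associated to $Q_I$ in \Cref{th:prd}. A direct expansion gives
\begin{equation*}
x^{a(1)}A(x) = \sum_{j=1}^{u}\bigl(x^{a(1)+a(j)}+x^{a(1)-a(j)}\bigr) = \bigl(x^{2a(1)}+1\bigr) + \sum_{j=2}^{u}\bigl(x^{a(1)+a(j)}+x^{a(1)-a(j)}\bigr) = A_I(x),
\end{equation*}
where $x^{2a(1)}+1$ is the $j=1$ summand. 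After relabeling so that $a(1)=\max_j a(j)$ (which leaves $I$ and $Q_I$ unchanged), $A_I(x)$ becomes an honest polynomial of degree $2a(1)$ with $A_I(0)=1$.

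Next, since $x^{a(1)}$ is a unit in $\bF_2[x^{\pm 1}]$ and is coprime to $x^n-1$ in $\bF_2[x]$ (because $x^n-1$ has nonzero constant term), rescaling $A$ by $x^{a(1)}$ does not affect its gcd with $x^n-1$. Combining this with the formula $v(n)=\deg\gcd_2(x^n-1,A(x))$ provided by \Cref{th:prd} yields
\begin{equation*}
v(n) = \deg\gcd_2\bigl(x^n-1,\ A_I(x)\bigr).
\end{equation*}

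For the periodicity claim, I would import the divisibility-to-periodicity dichotomy already used in the proof of \Cref{th:gij}. Let $K$ be the smallest positive integer with $A_I(x)\mid x^K-1$ in $\bF_2[x]$. If $A_I\mid x^K-1$, then modulo $A_I$ one has $x^{n+K}-1 = x^n(x^K-1) + (x^n-1) \equiv x^n-1$ (using $\gcd(x^n,A_I)=1$, which follows from $A_I(0)=1$), so $\gcd_2(x^{n+K}-1,A_I)$ and $\gcd_2(x^n-1,A_I)$ agree, forcing $v(n+K)=v(n)$ for every $n$. Conversely, $v(n)=\deg A_I$ holds exactly when $A_I\mid x^n-1$, i.e., on precisely the multiples of $K$, so any period of $\{v(n)\}$ must preserve that level set and hence be a multiple of $K$. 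Together these give $V(I)=K$.

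The only real work is the one-line identity $A_I(x)=x^{a(1)}A(x)$; everything else is a routine reapplication of \Cref{th:prd} and the divisibility-periodicity argument already developed for \Cref{th:gij}. No genuinely new obstacle arises beyond careful bookkeeping when moving between Laurent and ordinary polynomial divisibility.
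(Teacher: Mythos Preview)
Your proof is correct and follows essentially the same route as the paper: you identify $A_I(x)$ with $x^{a(1)}A(x)$, invoke the formula $v(n)=\deg\gcd_2(x^n-1,A(x))$ from \Cref{th:prd}, and then use the divisibility characterization of the period (the argument spelled out in the proof of \Cref{th:gij}). The paper's own proof is terser---it cites the ``unique largest value $2a(1)=v(V(Q))$'' clause of \Cref{th:prd} directly rather than rederiving the level-set argument---but the substance is the same.
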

\begin{proof}
We have 
\begin{equation} \label{eq:deg}
2a(1) = \deg(\gcd\nolimits_2(x^k - 1, A_I(x))
\end{equation}
($\gcd_2$ means the gcd is taken mod $2$) for some integers $k,$ where $2a(1)=\max v(n,I)$ is the unique largest value of $v(n,I)$ (by \Cref{th:prd}). It follows from the definition of $A_I(x)$ that the smallest such $k$ is the one specified in the theorem.  
\end{proof}

Since \cite[Theorem 5.2]{cc-bal} applies to the functions $Q_{I,n},$ computing the smallest $k$ in \Cref{th:QI} is the same as computing $K=2^tm$ in the proof of \Cref{th:gij}.  There is a straightforward algorithm to compute $V(I)$ using \Cref{th:QI} and the definition of $K=2^tm.$ By 
\Cref{th:QI}, every irreducible factor $f(x)$ of polynomial $A_I(x)$ has the property that $f(x)^r$ divides $x^k-1,$ where $r$ is the power to which $f(x)$ occurs in the factorization of $A_I(x).$ Given an irreducible factor $f(x)$ of degree $d,$ adjoining any of its roots generates the field $\bF_{2^d}.$ Since this is also the set of roots of $x^{2^d}-x,$ we have $f(x)$ divides $x^{2^d-1}-1.$ Hence the minimal $k$ such that $f(x)$ divides $x^k-1$ has $k$ divides $2^d-1.$ Now factoring $A_I(x)$ mod $2$ and using the divisibility conditions on $k$ given by the irreducible factors, plus computing the value of $t$ in $K=2^tm,$ we can obtain the smallest $k.$ 

We give a few examples to illustrate how easily these computations to determine $K=2^tm$ can be carried out. We used Mathematica for the polynomial mod $2$  $\gcd$ computations. Notice that in the examples we frequently use the well-known fact $$\gcd((x^a - 1),(x^b -1))= x^{gcd(a,b)} - 1$$ without comment. We also mention the values of $d$ such that $m$ divides $2^d-1,$ as explained in the previous paragraph, without comment. It is convenient to save space by defining $$A_I(x) = x^{2a(1), a(1)+a(2), a(1)+a(3), \ldots, a(1)+a(j), a(1)-a(j), a(1)-a(j-1), \ldots, a(1)-a(2),0}.$$

\begin{example}[I=\{7,4,1\}, V(I)=72]  \label{ex:ex1}
We have A(x,I)= $x^{14,11,8,6,3,0} = (x+1)^8(x^6+x^3+1)$ mod $2$ for the factorization into irreducible polynomials, so $t=3$ and $x^m - 1$ divides
$x^{2^6-1}=x^{63}.$  Since $x^9-1$ factors as  $(x+1)(x^2+x+1)(x^6+x^3+1)$ mod $2$ we have $$x^{72}-1 = (x^9-1)^8 = (x+1)^8(x^2+x+1)^8(x^6+x^3+1)^8 ~\text{mod}~ 2. $$ Thus $K = 8 \cdot 9 = 72$ is the least value of $k$ which gives the required degree $14$ in \Cref{eq:deg}.
\end{example} 
\begin{example}[I1=\{5,3,2,1\} and I2=\{5,4,2\}, period $34$]  \label{ex:ex2}
We have A(x,I1)= $x^{10,8,7,6,4,3,2,0} = (x+1)^2(x^8+x^5+x^4+x^3+1)$ mod $2$ for the factorization into irreducible polynomials, so $t=1$ and 
$x^m - 1$ divides $x^{2^8-1}.$  Since $x^{17}-1$ factors as  $(x+1)(x^8+x^5+x^4+x^3+1)(x^8+x^7+x^6+x^4+x^2+x+1)$ mod $2$ we have 
$$x^{34}-1 = (x+1)^2(x^8+x^5+x^4+x^3+1)^2(x^8+x^7+x^6+x^4+x^2+x+1)^2 ~\text{mod}~ 2.$$ Thus  for I1, $K = 34$ is the least value of $k$ which gives the required degree $10$ in \Cref{eq:deg}.  For I2, A(x,I2)= $x^{10,9,7,3,1,0} = (x+1)^2(x^8+x^7+x^6+x^4+x^2+x+1)$ mod $2$ is the product of irreducible polynomials, so again $t=1$ and $x^m-1$ divides $x^{2^8-1}.$ The degree $8$ factor appears in the above factorization of $x^{34}-1,$  so the previous argument for $\text{I}=\text{I}1$ proves that for $\text{I}=\text{I}2$ again $K = 34$ is the least value of $k$ which gives the required degree $10$ in \Cref{eq:deg}.
\end{example}
\begin{example}[I=\{6,2,1\}, V(I)=102]   \label{ex:ex3}
We have A(x,I)= $x^{12,8,7,5,4,0} = (x+1)^2(x^2+x+1)(x^8+x^7+x^6+x^4+x^2+x+1)$ mod $2$ for the factorization into irreducible polynomials, so $t=1,
x^2+x+1$ divides $x^3-1$ and $x^8+x^7+x^6+x^4+x^2+x+1$ divides $x^{255}-1=x^{15 \cdot 17}-1.$ We saw in \Cref{ex:ex2} that the degree $8$ polynomial divides $x^{17}-1$ so a calculation shows that $K=102=2 \cdot 3 \cdot 17$ is the least value of $k$ which gives the required degree $12$
in \Cref{eq:deg}.
\end{example}

Now we can summarize the progress for RS quadratic functions $Q_{I,n}$ on finding the weights, the nonlinearity, the $v(n)$ values and period $V(I),$ the values of $n$ for which the functions are balanced and the affine equivalence classes.  

The results for MRS functions $(0,t)_n$ are essentially complete.  In this case the weights are easily computed from the roots of the characteristic polynomial of the rules matrix (see \Cref{se:int}), without the need to compute initial values for the recursion (this will be an exponential size calculation, and so impossible if the 
recursion order is large). This follows because the {\it Easy Coefficients Conjecture}, stated below, was proved for MRS quadratic functions in \cite[Theorem 4.4]{cc-rec}. Let $M(I) = \max_{i \in I} i.$ Note that from \cite{cus_rec} the rules matrix for 
$Q_{I,n}$ always has size $2^{M(I)}+1$ and the characteristic polynomial always has a single root $2,$ with the remaining roots being irrational complex numbers with various multiplicities.

\begin{conjecture} {\bf Easy Coefficients Conjecture}  \label{ECC}
The recursion for the weights of $Q_{I,n}$ extended backwards from 
$n = 2 M(I) +1$ (to avoid bent functions) to $n=1,$ generates a sequence $w_1, w_2, \ldots$ (with $w_n = wt(Q_{I,n})$ for 
$n \geq 2 M(I) +1$) such that
\begin{equation} \label{eq:27}
w_n=2^{n-1} - \frac{1}{2}(\delta_1^n + \ldots + \delta_{2^{M(I)}}^n),~n = 1, 2, \ldots .
\end{equation}
Here $\delta_1, \ldots, \delta_{2^{M(I)}}$ is the list of the $2^{M(I)}$ irrational roots of the characteristic polynomial for the rules matrix of $Q_{I,n},$ with the distinct roots $\delta_1, \ldots, \delta_{m(Q(I))}$ of the minimal polynomial for $Q_{I,n}$ listed first. The remaining roots are various duplicates of the first $m(Q(I))$ roots.
\end{conjecture}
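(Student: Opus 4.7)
The plan is to establish, for $n \geq 2M(I)+1$, the identity
\begin{equation*}
w_n := wt(Q_{I,n}) = 2^{n-1} - \frac{1}{2}\sum_{j=1}^{2^{M(I)}} \delta_j^n.
\end{equation*}
Once this holds on $2^{M(I)}+1$ consecutive values of $n$ (the order of the recurrence), both sides satisfy the linear recurrence encoded by the characteristic polynomial of the rules matrix --- the left by construction of that matrix, the right because $2^{n-1}$ and each $\delta_j^n$ are solutions --- so the equality propagates in both directions and extends to $n=1,2,\ldots$ as required by \Cref{eq:27}. Thus the real content of the conjecture lives above the threshold.

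First, I would derive an explicit Walsh-side formula. By $w_n = 2^{n-1} - \frac{1}{2}W(Q_{I,n})(\vec{0})$ and $v$-plateauedness, $|W(Q_{I,n})(\vec{0})|$ equals $2^{(n+v(n))/2}$ with $v(n) = \deg \gcd_2(x^n-1, A_I(x))$ by \Cref{vvals1}. Unpacking this degree via the cyclotomic decomposition of $x^n-1$ in $\bF_2[x]$, and tracking how each irreducible factor of $A_I(x)$ contributes (with the $\pm$ sign dictated by further trace-function data in the spirit of \Cref{subse:unbal}), I would recast the Walsh sum in the form $\sum_k \zeta_k^n$, where the $\zeta_k$ are $\sqrt{2}$-scaled roots of unity whose orders divide the period $V(I)$ from \Cref{th:QI}.

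Second, I would analyze the rules matrix $R_I$ of \cite{cus_rec}. Its characteristic polynomial factors as $(x-2)\prod_j(x-\delta_j)$, and the aim is to match the multiset $\{\delta_j\}$ to the multiset arising on the Walsh side. In the MRS case, \cite[Theorem 4.4]{cc-rec} does this by exploiting the circulant-type block structure of the single-monomial rules matrix. For general $Q_I$ the natural attempt is a block decomposition of $R_I$ indexed by the irreducible factors of $A_I(x)$, so that the eigenvalues and their multiplicities become readable from cyclotomic data.

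The main obstacle is multiplicity bookkeeping. The conjecture demands that each $\delta_j^n$ appears with coefficient exactly $1$ in the sum --- equivalently, that in the general solution $\sum_i p_i(n)\alpha_i^n$ of the recurrence every coefficient polynomial $p_i(n)$ collapses to the constant $m_i$, the multiplicity of $\alpha_i$ in the characteristic polynomial. This is a rigid constraint that rules out genuine Jordan-block growth in $n$. For $|I|>1$ the rules matrix lacks the clean cyclic structure of the MRS case, so establishing a uniform description of its Jordan form in terms of $A_I(x)$ --- particularly around the repeated roots of $A_I(x)$, where both the characteristic multiplicities and the potential polynomial-in-$n$ coefficients must conspire correctly --- is the technical crux of the problem.
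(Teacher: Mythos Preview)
The statement you are attempting to prove is a \emph{conjecture} in the paper, not a theorem: the authors explicitly label it the ``Easy Coefficients Conjecture'' and remark immediately afterward that they ``believe that the ECC is true for all quadratic RS functions,'' citing only the monomial case \cite[Theorem~4.4]{cc-rec} as settled. There is therefore no proof in the paper to compare your proposal against.

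As for your proposal itself, it is a strategy sketch rather than a proof, and you are candid about this: you correctly isolate the crux (that the coefficient polynomials $p_i(n)$ in the general recurrence solution must all collapse to the constants $m_i$, i.e., that no genuine Jordan growth occurs) and then say, in effect, that resolving this ``is the technical crux of the problem'' --- without resolving it. Two further gaps are worth flagging. First, your Walsh-side rewriting of $W(Q_{I,n})(\vec 0)$ as $\sum_k \zeta_k^n$ with $\zeta_k$ being $\sqrt 2$-scaled roots of unity presupposes exactly the kind of eigenvalue structure the paper records separately as \Cref{cj:ord} (also open); you cannot invoke it as an established step. Second, even granting that structure, matching the Walsh-side multiset $\{\zeta_k\}$ to the rules-matrix multiset $\{\delta_j\}$ is the entire content of the conjecture, and your block-decomposition idea for $R_I$ ``indexed by the irreducible factors of $A_I(x)$'' is suggestive but not carried out --- the paper's \Cref{se:binrules} gives the explicit form \Cref{eq:rij} of $R(i,j)$ and still leaves its spectrum conjectural. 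So your outline identifies the right obstacles but does not surmount them; the statement remains open.
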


For $(0,t)_n,$, we know \cite[Theorem 3.4]{cc-bal} the minimal polynomial for the rules matrix is
$$x^{2t+1}-2 x^{2t}-2^t x+2^{t+1} = (x-2)(x^{2t}-2^t)$$
and the remaining roots of the characteristic polynomial, of degree $2^t+1$, are various duplicates of the $2t$ irrational roots.

In the MRS quadratic case $(0,t)_n$, the weights and nonlinearities were already determined in $2009$ \cite[Theorem 8]{kph} by a different method. That method does not give an explicit description of those $n$ for which $(0,t)_n$ is balanced, but, as remarked in \Cref{se:int}, \cite[Lemma 4.2]{cc-bal} does. The answer is that for $n \geq 2t+1$ $(0,t)_n$ is balanced except when 
$n \equiv 0 \bmod{2^{\nu_2(t)+1}}.$ In \cite[Theorem 3.17]{cc-bal}, it is shown that $v(n) =\gcd(n,2t)$ if $(0,t)_n$ is balanced and $v(n) = 2\gcd(n,2t)$ otherwise, and \cite[Lemma 4.1]{cc-bal} gives a complete description of the $v(n)$ sequence for 
$(0,t)_n$, including a proof that the period length and maximum value are both $2t.$ The count $\tau(n) -1$ ($\tau(n)$ denotes the number of positive integer divisors of $n$) for the number of affine equivalence classes for the functions $(0,t)_n,~n \geq 3,$ is proved in \cite[Theorem 4.3]{cc-bal}.

For binomial quadratic RS functions, if \Cref{ECC} (call it the ECC for short) is true the above remarks about ease of computation of the weights will apply.  We believe that the ECC is true for all quadratic RS functions and indeed that a version of the ECC is probably true for RS functions of any degree.

With respect to the $v$-values, \Cref{th:gij}  gives a nice formula for the length of their period for the binomial quadratic RS functions, but for sums of more functions we have nothing beyond \Cref{th:prd} and the method for finding $V(I)$ described below \Cref{th:QI}. \Cref{th:whenbinombal} gives a precise answer to the question of when binomial quadratic RS functions are balanced. This completes, for the binomial case, the work begun in \cite[Theorem 5.5]{cc-bal}. \Cref{se:binrules} gives a complete description of the rules matrices for the binomial case, which can be used to
to do detailed computations for any information about the numbers $v(n)$ and the nonlinearity.

\section{Scaled roots of unity}\label{se:scaled1}

The computational evidence on the rules matrices for $(0,j)+(0,i)$ suggests that the eigenvalues (except for the single 2) are roots of unity scaled by $\sqrt 2$. This was also the case for {\it monomial} functions, whence the relevance of \cite[Proposition 2.8]{cc-rec} on how the numbers $\sqrt 2\cdot \zeta$, for roots of unity $\zeta$, coalesce into Galois conjugacy classes. As a side-note, we expand that remark to algebraic integers of the form $\sqrt d~\zeta$ for square-free $d\in \bZ$ and roots of unity $\zeta$.

Suppose, now that $\ord(\zeta)=n$, i.e. $\zeta$ is a primitive $n^{th}$ root of unity. There are two possibilities to consider:
\begin{itemize}
\item When $n$ is divisible by 4 the cyclotomic polynomial $\Phi_n$ is a polynomial in $x^2$ \cite[\S 9.1, Exercise 12]{cox-gal}. In that case
  \begin{equation}\label{eq:n04}
    \left\{\sqrt{d}~ \zeta\ |\ \ord(\zeta)=n\right\}
  \end{equation}
  are the roots of an integral monic polynomial, so decomposes as a disjoint union of Galois conjugacy classes. 
\item When $n$ is either odd or $2(\mathrm{mod}~4)$ write $n_o$ for the largest odd divisor of $n$, so that $n$ is either $n_o$ or $2n_o$ and this time around
  \begin{equation*}
    \Phi_{n_0}(x^2) = \Phi_{n_0}(x)\Phi_{n_0}(-x) = \Phi_{n_0}(x)\Phi_{2n_0}(x) = \Phi_{2n_0}(-x^2)
  \end{equation*}
  (as follows from \cite[\S 9.1, Exercise 12]{cox-gal} again). The set
  \begin{equation}\label{eq:n24}
    \left\{\sqrt{d}~ \zeta\ |\ \ord(\zeta)=n_o\text{ or }2n_o\right\}
  \end{equation}
  can be partitioned into Galois classes (in place of \Cref{eq:n04}). 
\end{itemize}

Consider, now, the modification of the cyclotomic polynomial $\Phi_n$ that will appropriately accommodate \Cref{eq:n04} or \Cref{eq:n24} as its roots:

\begin{definition}\label{def:modphi}
  For a square-free integer $d$ and a positive integer $n$ with Euler function $\varphi(n)=\deg\Phi_n$ we set
  \begin{equation*}
    \widetilde{\Phi}_{n,d}(x):=
    \begin{cases}
      d^{\frac{\varphi(n)}2}\Phi_n\left(\frac x{\sqrt d}\right) &\text{ if }4|n\\
      d^{\varphi(n)}\Phi_{n_o}\left(\frac {x^2}{d}\right) &\text{ otherwise },\\
    \end{cases}
  \end{equation*}
  where as before, $n_o$ is the largest odd divisor of $n$.

  By construction, $\widetilde{\Phi}_{n,d}$ is monic.
\end{definition}

Unqualified polynomial (ir)reducibility refers to $\bZ[x]$ or, equivalently in all cases of interest, $\bQ[x]$; whenever we consider the issue over other coefficient fields we will make this explicit.

\begin{theorem}\label{th:whenmodphired}
  Let $d$ be a square-free integer and $n\in \bZ_{>0}$. The following conditions are equivalent:
  \begin{enumerate}[(a)]
  \item\label{item:3} We have a factorization
    \begin{equation*}
      \widetilde{\Phi}_{n,d}(x) = \pm P(x)P(-x)
    \end{equation*}
    for some irreducible $P\in \bZ[x]$. 
  \item\label{item:4} The polynomial $\widetilde{\Phi}_{n,d}$ is reducible in $\bZ[x]$.
  \item\label{item:5} Either
    \begin{itemize}
    \item $d=2\text{ or }3~\mathrm{mod}~4$, $4d|n$ and $\frac{n}{4d}$ is odd;      
    \item or $d=1~\mathrm{mod}~4$ divides $n$ and $n\ne 0~\mathrm{mod}~4$. 
    \end{itemize}
  \end{enumerate}
\end{theorem}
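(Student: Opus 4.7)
The plan is to factor $\widetilde{\Phi}_{n,d}(x) = Q(x^2)$ for an irreducible $Q \in \bZ[y]$, use the general structure of $Q(x^2)$ to match the first two conditions, then translate the resulting arithmetic question via the classical theory of conductors of quadratic subfields of cyclotomic fields. In both clauses of \Cref{def:modphi} the polynomial $\widetilde{\Phi}_{n,d}$ is even in $x$: explicitly,
\begin{equation*}
  \widetilde{\Phi}_{n,d}(x) = Q(x^2)
  \quad\text{with}\quad
  Q(y) := d^{\varphi(N)}\Phi_N(y/d),
  \quad
  N := \begin{cases} n/2 & \text{ if }4\mid n,\\ n_o & \text{ otherwise.}\end{cases}
\end{equation*}
The roots of $Q$ are $d\xi$ for $\xi$ a primitive $N$-th root of unity, so $Q$ is irreducible in $\bZ[y]$ by the transitivity of $(\bZ/N)^*$ on those $\xi$. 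The crucial dichotomy is that $N$ is \emph{odd} in the ``otherwise'' case (where $N = n_o$) and \emph{even} when $4\mid n$ (where $N = n/2$).

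The equivalence \Cref{item:3}$\Leftrightarrow$\Cref{item:4} rests on a general fact: for irreducible $Q \in \bZ[y]$, the polynomial $Q(x^2)$ is either irreducible or factors into exactly two irreducible factors of degree $\deg Q$. A Galois-orbit argument in the splitting field of $Q$ shows the number of irreducible factors of $Q(x^2)$ equals the number of Galois orbits on $\{\pm\sqrt{r_i}\}$, which is $1$ or $2$ since the group acts transitively on $\{r_i\}$. When $Q(x^2)$ is reducible, the involution $x\mapsto -x$ swaps the two irreducible factors --- which cannot both lie in $\bZ[x^2]$, since otherwise substituting $y = x^2$ would non-trivially factor $Q$ --- forcing them into the form $P(x)$ and $\pm P(-x)$, which is \Cref{item:3}; the reverse implication is immediate.

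For \Cref{item:4}$\Leftrightarrow$\Cref{item:5}, the same orbit argument shows $Q(x^2)$ is reducible precisely when $\sqrt{r_1} \in \bQ(r_1)$ for a root $r_1 = d\zeta_N$, i.e., when a square root of $d\zeta_N$ lies in $\bQ(\zeta_N)$. Let $f_d$ denote the conductor of $\bQ(\sqrt d)$: explicitly $f_d = |d|$ when $d\equiv 1\pmod 4$ and $f_d = 4|d|$ when $d\equiv 2,3\pmod 4$, so by Kronecker--Weber $\sqrt d\in \bQ(\zeta_m) \Leftrightarrow f_d\mid m$. When $N = n_o$ is odd, $\zeta_{2N}\in \bQ(\zeta_N)$ and a square root of $d\zeta_N$ can be taken as $\sqrt d\,\zeta_{2N}$, which lies in $\bQ(\zeta_N)$ iff $\sqrt d$ does, iff $f_d\mid n_o$; since $n_o$ is odd this forces $d\equiv 1\pmod 4$ and $d\mid n$, which combined with $4\nmid n$ is the second clause of \Cref{item:5}. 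When $N = n/2$ is even, $\bQ(\zeta_N) \subsetneq \bQ(\zeta_{2N}) = \bQ(\zeta_n)$ is a proper quadratic extension whose non-trivial automorphism $\tau$ negates $\zeta_{2N}$; now $\sqrt d\,\zeta_{2N} \in \bQ(\zeta_N)$ requires in particular $\sqrt d\in \bQ(\zeta_{2N})$, and the identity $\tau(\sqrt d\,\zeta_{2N}) = -\tau(\sqrt d)\zeta_{2N}$ then says $\sqrt d\,\zeta_{2N}$ is $\tau$-fixed iff $\tau(\sqrt d) = -\sqrt d$, iff $\sqrt d\notin \bQ(\zeta_N)$. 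Combined, $\sqrt d\,\zeta_{2N} \in \bQ(\zeta_N)$ iff $\sqrt d\in \bQ(\zeta_{2N})\setminus \bQ(\zeta_N)$, iff $f_d\mid n$ but $f_d\nmid n/2$. Since $f_d$ is odd for $d\equiv 1\pmod 4$ while $n$ is even, this is impossible in that residue class and forces $d\equiv 2,3\pmod 4$ with $f_d = 4|d|$, yielding ``$4|d|\mid n$ with $n/(4|d|)$ odd'' --- the first clause of \Cref{item:5}.

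The main obstacle is the $N$-even case, which requires careful analysis of $\mathrm{Gal}(\bQ(\zeta_{2N},\sqrt d)/\bQ(\zeta_N))$ --- cyclic of order $2$ when $\sqrt d\in \bQ(\zeta_{2N})$ and Klein-four otherwise --- in order to isolate the pivotal condition $\tau(\sqrt d) = -\sqrt d$. The rest (irreducibility of $Q$, the structure theorem for $Q(x^2)$, Kronecker--Weber, and the conductor formula for $\bQ(\sqrt d)$) is standard.
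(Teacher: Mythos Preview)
Your proof is correct and takes a genuinely different route from the paper's. The key structural difference is your uniform realization $\widetilde{\Phi}_{n,d}(x)=Q(x^2)$ with $Q(y)=d^{\varphi(N)}\Phi_N(y/d)$ irreducible (using $\Phi_n(x)=\Phi_{n/2}(x^2)$ when $4\mid n$): this lets you invoke the standard dichotomy for $Q(x^2)$ and obtain \Cref{item:3}$\Leftrightarrow$\Cref{item:4} in one stroke, whereas the paper argues the $A\sqcup(-A)$ orbit decomposition by a separate case analysis on $4\mid n$ versus $4\nmid n$. For \Cref{item:4}$\Leftrightarrow$\Cref{item:5} you reduce to the single membership question $\sqrt d\,\zeta_{2N}\in\bQ(\zeta_N)$, which via the conductor $f_d$ becomes the clean criterion ``$f_d\mid n_o$'' (odd case) or ``$f_d\mid n$ but $f_d\nmid n/2$'' (even case). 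The paper instead shows reducibility forces $\bQ(\sqrt d)\subseteq\bQ(\zeta_n)$ and, in the $4\mid n$ case, phrases the remaining condition as ``the Galois element $\zeta\mapsto-\zeta$ negates $\sqrt d$'', which it then unpacks using the primitive Dirichlet character attached to $\bQ(\sqrt d)$ to conclude that $1+\tfrac n2\in(\bZ/n)^\times$ lies outside the kernel iff $n/D$ is odd. Your $\tau$ is exactly this Galois element and your condition $\tau(\sqrt d)=-\sqrt d$ is the same one, but your conductor formulation ($f_d\mid n$, $f_d\nmid n/2$) bypasses the character machinery. In short: your argument is more self-contained and handles both parity cases in parallel; the paper's is more explicit about the Galois-theoretic mechanism and ties into the Dirichlet-character viewpoint, which may be what one wants for generalizations.
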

\begin{proof}
  There are two qualitatively different cases: $n$ is divisible by 4, or not. Either way though, the reducibility of $\widetilde{\Phi}_{n,d}$ in $\bZ[x]$ entails that of $\Phi_n$ in $\bQ(\sqrt d)[x]$.
  
  To see this, note first that the case $4|n$ is clear: $\widetilde{\Phi}_{n,d}$ is then obtained from $\Phi_n$ by a simple linear substitution and rescaling. In the other case the factors of $\widetilde{\Phi}_{n,d}$ in $\bZ[x]$ cannot be the transforms of the original (irreducible) factors $\phi_{n_o}$ and $\phi_{2n_o}$, because these contain monomials with odd exponents ($n_o$ being odd) so have some non-zero coefficients in $\sqrt d~ \bZ$. It follows, then, that a factor of $\widetilde{\Phi}_{n,d}$ will have a proper, non-empty subset $\{\lambda_i\}$ of the roots of $\Phi_{n_o}$ as its own roots, and similarly for $\Phi_{2n_o}$ (for a set $\{\lambda'_j\}$); the polynomials
  \begin{equation*}
    \prod(x-\lambda_i)
    \quad\text{and}\quad
    \prod(x-\lambda'_j)
  \end{equation*}
  are proper factors of $\Phi_{n_o}$ and $\Phi_{2n_o}$ respectively over $\bQ(\sqrt 5)$.

  In turn, the factorizability of $\phi_n$ over $\bQ(\sqrt d)$ is equivalent to the inclusion
  \begin{equation}\label{eq:qdinqn}
    \bQ(\sqrt d)\subseteq \text{the cyclotomic field }\bQ_n:=\bQ\left(e^{2\pi i/n}\right).
  \end{equation}
  This would make $n$ divisible by the {\it conductor} of $d$:
  \begin{equation}\label{eq:isdiv}
    d=2,3~\mathrm{mod}~4\Rightarrow 4d|n
    \quad\text{and}\quad
    d=1~\mathrm{mod}~4\Rightarrow d|n
  \end{equation}
  (\cite[\S VI.1, Theorem 1.2, Corollary 1.3 and surrounding discussion]{jan-anf}). These conditions are also weaker than those listed under part \Cref{item:5} of the statement, so we can assume, throughout the proof (regardless of which implication is being addressed), that \Cref{eq:isdiv} holds (along with \Cref{eq:qdinqn}).

  Next, we argue that unless $\widetilde{\Phi}_{n,d}$ is irreducible, its root set splits as a disjoint union of exactly two Galois orbits, each minus the other:
  \begin{equation}\label{eq:a-a}
    \text{roots of }\widetilde{\Phi}_{n,d} = A\sqcup -A.
  \end{equation}

  When $4\nmid n$, so that per $d=2,3~\mathrm{mod}~4$ per \Cref{eq:isdiv}, said partition simply holds (i.e. is not contingent):
  \begin{itemize}
  \item On the one hand, for a primitive $n_o^{th}$ root of unity $\zeta$ the root $-\sqrt d~ \zeta$ is not in the Galois orbit of $\sqrt d~ \zeta$ (because $\zeta$ and $-\zeta$ are not conjugates and an automorphism that fixes $\zeta$ also fixes $\sqrt d\in \bQ(\zeta)$).
  \item So that for every other primitive $n_o^{th}$ root $\zeta'$ exactly one of $\pm\sqrt{d}~\zeta'$ will be a conjugate of $\sqrt{d}~\zeta$. 
  \end{itemize}
  For $4|n$, there are two possibilities:
  \begin{itemize}
  \item $\pm\sqrt d~\zeta$ are conjugates, in which case all roots of $\widetilde{\Phi}_{n,d}$ are;
  \item or not, whence \Cref{eq:a-a} because in that case for each $\zeta'$, $\ord(\zeta')=n$ exactly one of $\pm\sqrt{d}~\zeta'$ is a conjugate of $\sqrt{d}~\zeta$. 
  \end{itemize}
  This much already proves \Cref{item:3} and \Cref{item:4} equivalent to each other and also, when $4\nmid n$, to \Cref{item:5}. It thus remains to prove
  \begin{equation*}
    \text{\Cref{item:5}}\Leftrightarrow\text{\Cref{item:3} and/or \Cref{item:4}}
  \end{equation*}
  when $4|n$. The selfsame argument just employed, though, shows that when $4|n$ $\widetilde{\Phi}_{n,d}$ is reducible if and only if the unique Galois automorphism sending a fixed primitive $n^{th}$ root of unity $\zeta$ to $-\zeta$ also negates $\sqrt d\in \bQ(\zeta)$. This, we claim, is equivalent to \Cref{item:5}, i.e. the ration between $n$ and the {\it discriminant}
  \begin{equation*}
    D:=
    \begin{cases}
      4d&\text{if }d=2,3~\mathrm{mod}~4\\
      d&\text{if }d=1~\mathrm{mod}~4\\
    \end{cases}
  \end{equation*}
  of $\bQ(\sqrt d)$ (\cite[\S 1.2.8]{mp-nt}) is odd. To see why, recall from \cite[\S 4.4.1]{mp-nt} that
  \begin{equation*}
    \mathrm{Gal}\left(\bQ(\zeta)/\bQ(\sqrt d)\right)\subset \mathrm{Gal}(\bQ(\zeta)/\bQ)
  \end{equation*}
  is the kernel of a {\it primitive order-2 Dirichlet character modulo $|D|$} in the sense of \cite[\S 2.2.2, discussion preceding (2.2.7)]{mp-nt}. Now, for a Galois element $\alpha$,
  \begin{equation*}
    \zeta\xmapsto{\quad\alpha\quad}-\zeta
    \quad\Longrightarrow\quad
    \sqrt d\xmapsto{\quad\alpha\quad}-\sqrt d
  \end{equation*}
  is equivalent to
  \begin{equation*}
    1+\frac n2\in (\bZ/n)^{\times}\cong \mathrm{Gal}(\bQ(\zeta)/\bQ)
  \end{equation*}
  {\it not} belonging to that kernel; the primitivity of the above-mentioned character then makes this equivalent to $\frac n2$ no longer being divisible by $D$, i.e. to $\frac nD$ being odd. 
\end{proof}

\begin{remark}
  Restricted to $d=2$ and slightly paraphrased, \Cref{th:whenmodphired} specializes back to \cite[Proposition 2.8]{cc-rec}. 
\end{remark}

\section{Binomial rules matrices}\label{se:binrules}

We will recall the construction of the recursion matrices (``rules matrices'') for weights of RS functions, focusing on the case of monomial and binomial RS functions. The general construction is described in \cite[\S III]{cus_rec}, with more details (including a helpful Mathematica program) in \cite{cus_rec-xv}.

Recall that a {\it Hadamard matrix} is a square matrix with $\pm 1$ and mutually orthogonal rows/columns; see e.g. \cite{hw_had} for an overview. It will be useful to introduce the following $2\times 2$ Hadamard matrices:
\begin{equation*}
  H_0:=
  \begin{pmatrix}
    1&\phantom{-}1\\
    1&-1
  \end{pmatrix}
  ,\quad
  H_1:=
  \begin{pmatrix}
    \phantom{-}1&1\\
    -1&1
  \end{pmatrix}.
\end{equation*}

For any matrix $M$ and non-negative integer $k$, denote 
\begin{itemize}
\item by $\pi_k(M)$ (or $\pi_k M$) the matrix obtained by appending $k$ all-0 columns after each column of $M$ (`$\pi$' is meant to stand for `pad');
\item and by $\rho_k(M)$ (or $\rho_k M$) the matrix obtained by rotation all rows of $M$ simultaneously rightward $k$ times. 
\end{itemize}

In the last item: it makes sense to allow {\it integers} $k$ (not necessarily non-negative): a $(-k)$-fold rightward rotation is a $k$-fold {\it leftward} rotation.

The matrix $R(i)$ of the MRS function $(0,i)$ can now be described as
\begin{equation}\label{eq:ri}
  R(i)=
  \begin{pmatrix}
    \pi_{2^{i-1}-1} H_0\\
    \rho_1\pi_{2^{i-1}-1} H_0\\
    \vdots\\
    \rho_{2^{i-1}-1}\pi_{2^{i-1}-1} H_0
  \end{pmatrix}.
\end{equation}
A simple count confirms that this is a $2^i\times 2^i$ matrix. It will be convenient, when it is understood that the matrices under consideration are square and their size is known, to simply write `$\pi$' for whatever $\pi_k$ operation we need to apply. \Cref{eq:ri}, for instance, would be the more easily-readable 
\begin{equation}\label{eq:ri2}
  R(i)=
  \begin{pmatrix}
    \pi H_0\\
    \rho_1\pi H_0\\
    \vdots\\
    \rho_{2^{i-1}-1}\pi H_0
  \end{pmatrix}.
\end{equation}

Consider, next, a {\it binomial} RS function, of the form $(0,j)+(0,i)$ with $i>j\ge 1$. Its corresponding matrix $R(i,j)$ will differ from $R(i)$ in that some of the instances of $H_0$ in \Cref{eq:ri2} will be $H_1$s instead (which ones, depends on $j$). Specifically, the rows in \Cref{eq:ri2} alternate in whether or not an $H_1$ is substituted in batches of $2^{j-1}$:
\begin{equation}\label{eq:rij}
  R(i,j)=
  \begin{pmatrix}
    \pi H_0\\
    \rho_1\pi H_0\\
    \vdots\\
    \rho_{2^{j-1}-1}\pi H_0\\
    \rho_{2^{j-1}}\pi H_1\\
    \vdots\\
    \rho_{2^{j}-1}\pi H_1\\
    \rho_{2^{j}}\pi H_0\\
    \vdots
  \end{pmatrix}.
\end{equation}
Again, this is a $2^i\times 2^i$ matrix. 

\begin{conjecture}\label{cj:ord}
  For $i>j\ge 1$ the eigenvalues of the matrix $R(i,j)$ of \Cref{eq:rij} are all $\sqrt{2}$-scaled roots of unity.

  Moreover, the multiplicative order of $\frac{R(i,j)}{\sqrt 2}$ is precisely $2\cdot \mathrm{lcm}(4,\ i-j,\ i+j)$. 
\end{conjecture}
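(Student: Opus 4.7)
The argument splits naturally into two parts: showing that the eigenvalues of $R(i,j)$ are $\sqrt{2}$-scaled roots of unity, and then pinning down the precise multiplicative order of $R(i,j)/\sqrt{2}$.

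For the first part, a direct block computation using $H_0H_0^T = H_1H_1^T = 2I$ together with the pairwise-disjoint column supports of the rotated padded blocks $\rho_k\pi H_b$ (for $k=0,1,\dots,2^{i-1}-1$) yields $R(i,j)\,R(i,j)^T = 2I$. Thus $R(i,j)/\sqrt{2}$ is orthogonal, so each eigenvalue $\delta$ of $R(i,j)$ has absolute value $\sqrt{2}$. Orthogonality alone is of course insufficient for finite order (irrational rotations in $\mathrm{SO}(2)$ are counterexamples), so I would then aim to establish the stronger polynomial identity
\begin{equation*}
  R(i,j)^{2K} = 2^K I,\qquad K:=\mathrm{lcm}(4,\,i-j,\,i+j),
\end{equation*}
which, combined with the $\bC$-diagonalizability guaranteed by $R(i,j)\,R(i,j)^T=2I$, forces all eigenvalues of $R(i,j)/\sqrt{2}$ to be $2K$-th roots of unity.

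To obtain this identity, the plan is to diagonalize $R(i,j)$ in the Fourier basis for the cyclic group $\bZ/2^i$ acting by the shift $\rho$. The rotations $\rho_k$ commute and diagonalize jointly over the characters of $\bZ/2^i$; superimposing the Hadamard and sign data, the alternation of $H_0$ and $H_1$ in blocks of size $2^{j-1}$ prescribed by \Cref{eq:rij} couples pairs of characters and produces a block-diagonal form of $R(i,j)$ as a direct sum of $2\times 2$ blocks indexed by characters of $\bZ/2^{i-1}$. The eigenvalues of those blocks can then be computed in closed form, with orders governed by the factorization $A(x)\sim(x^{i+j}+1)(x^{i-j}+1)$ appearing in the proof of \Cref{th:gij}. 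For the exact order, one identifies an eigenvalue $\sqrt{2}\,\zeta$ with $\zeta^K=-1$, which accounts for the extra factor of $2$ in the conjectured $2K$ over $K$.

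The main obstacle is the concrete combinatorics of the Fourier diagonalization: the pattern in which $H_0$ and $H_1$ alternate interacts nontrivially with the $\bZ/2^i$ character table, and extracting the $2\times 2$ blocks and their eigenvalues cleanly for all $i>j\ge 1$ requires careful index tracking (especially reconciling the factor $\mathrm{lcm}(4,i-j,i+j)$ with the $v$-period $2\,\mathrm{lcm}(i-j,i+j)$ of \Cref{th:gij}, which differ exactly when $\mathrm{lcm}(i-j,i+j)$ is not already a multiple of $4$). A useful sanity check is $(i,j)=(2,1)$, where $R(2,1)$ has characteristic polynomial $x^4-2x^3+2x^2-4x+4$, whose roots factor as $\sqrt{2}$ times primitive $12$-th and $24$-th roots of unity, consistent with the conjectured $2\,\mathrm{lcm}(4,1,3)=24$.
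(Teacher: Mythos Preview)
The statement you are addressing is \Cref{cj:ord}, which the paper presents as a \emph{conjecture}, not a theorem; the paper offers no proof of it. The only supporting remark the authors make is that $R(i,j)^i$ is a $2^i\times 2^i$ Hadamard matrix (by a computation analogous to \cite[Lemma 5.8]{cc-rec}), followed immediately by \Cref{ex:badhad} to emphasize that being Hadamard does \emph{not} by itself force eigenvalues to be scaled roots of unity. There is therefore no ``paper's own proof'' against which to compare your proposal.

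As for the proposal itself: it is an outline, not a proof, and you say as much (``The main obstacle is the concrete combinatorics of the Fourier diagonalization''). The orthogonality step $R(i,j)R(i,j)^T=2I$ is reasonable and would give $|\delta|=\sqrt 2$ for every eigenvalue, but, as you correctly note, this is far from finite order. The substantive claim---that a Fourier change of basis for the $\bZ/2^i$-shift block-diagonalizes $R(i,j)$ into explicit $2\times 2$ blocks whose eigenvalues you can read off and whose orders assemble into $2\cdot\mathrm{lcm}(4,i-j,i+j)$---is asserted but not executed. That step is exactly the content of the conjecture, and until it is carried out in full (including the reconciliation you flag between $\mathrm{lcm}(4,i-j,i+j)$ and the $v$-period $2\,\mathrm{lcm}(i-j,i+j)$ of \Cref{th:gij}), what you have is a plausible plan of attack on an open problem rather than a proof.
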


In particular, the eigenvalues of $R(i,j)$ would have to be $\sqrt{2}$-scaled roots of unity. It is not difficult to see that $R(i,j)^i$ is a $2^i\times 2^i$ Hadamard matrix: the computation is no more difficult than that delivering \cite[Lemma 5.8]{cc-rec}. Although
\begin{itemize}
\item the roots of an $n\times n$ Hadamard matrix are necessarily algebraic integers all of whose Galois conjugates have absolute value $\sqrt{n}$;
\item and some of the literature focuses on producing Hadamard matrices whose eigenvalues are all scaled roots of unity \cite[Theorem 1.4]{ecs_had},
\end{itemize}
that phenomenon is by no means universal:

\begin{example}\label{ex:badhad}
  Consider the $8\times 8$ {\it Sylvester matrix} (\cite[\S 2, equation (1)]{ecs_had})
  \begin{equation*}
    \begin{pmatrix}
      1&\phantom{-}1\\
      1&-1
    \end{pmatrix}^{\otimes 3},
  \end{equation*}
  and interchange its two top rows. This will produce the $8\times 8$ Hadamard matrix
  \begin{equation*}
    \begin{pmatrix}
      1&-1&\phantom{-}1&-1&\phantom{-}1&-1&\phantom{-}1&-1\\
      1&\phantom{-}1&\phantom{-}1&\phantom{-}1&\phantom{-}1&\phantom{-}1&\phantom{-}1&\phantom{-}1\\
      1&-1&-1&\phantom{-}1&\phantom{-}1&-1&-1&\phantom{-}1\\
      1&\phantom{-}1&\phantom{-}1&\phantom{-}1&-1&-1&-1&-1\\
      1&-1&\phantom{-}1&-1&-1&\phantom{-}1&-1&\phantom{-}1\\
      1&\phantom{-}1&-1&-1&-1&-1&\phantom{-}1&\phantom{-}1\\
      1&-1&-1&\phantom{-}1&-1&\phantom{-}1&\phantom{-}1&-1\\
      1&\phantom{-}1&-1&-1&\phantom{-}1&\phantom{-}1&-1&-1
    \end{pmatrix}.
  \end{equation*}
  Its characteristic (and also minimal) polynomial factors as
  \begin{equation*}
    (x^2-4x+8)(x^2+4x+8)(x^4-12x^2+64);
  \end{equation*}
  while the roots of the first two factors are $\sqrt{8}$-scaled order-8 roots of unity, the roots of the last factor are not: their squares are
  \begin{equation*}
    \sqrt{8}\left(\frac 34\pm \frac{\sqrt 7}{4}\right) = \sqrt{8}(\cos\theta\pm i\sin\theta),
  \end{equation*}
  and because its cosine $\frac 34$ is rational but not 0, $\pm 1$, or $\pm\frac 12$ the angle $\theta$ cannot be a rational multiple of $\pi$ \cite[Corollary 3.12]{niv_irr}. 
\end{example}

\Addresses

\end{document}